\documentclass[twoside,11pt]{article}

\usepackage[english]{babel}
\usepackage{amsmath}
\usepackage{amsthm}
\usepackage{amsfonts}
\usepackage{amssymb}
\usepackage{graphicx}
\usepackage{colortbl,dcolumn}
\usepackage{paralist}  

       \textheight=220truemm
%
       \textwidth=160truemm
 \hoffset=0truemm
 \voffset=0truemm

 \topmargin=0truemm
       \oddsidemargin=0truemm
       \evensidemargin=0truemm

       \newtheorem{definition}{\bf Definition}[section]
       \newtheorem{lemma}[definition]{\bf Lemma}
       \newtheorem{theorem}[definition]{\bf Theorem}

       \newtheorem{remark}[definition]{\bf Remark}

       \numberwithin{equation}{section}

\pagestyle{myheadings} \markboth{\it L.Chen et al.} {\it Radial solutions of Hydrodynamic model of semiconductors}

\newcommand{\R}{\mathbb{R}}

\newcommand{\be}{\begin{equation}}
\newcommand{\ee}{\end{equation}}
\newcommand{\T}{\tau}
\newcommand{\rh}{\rho}
\newcommand{\bc}{\begin{equation*}}
\newcommand{\ec}{\end{equation*}}
\newcommand{\intr}{\int_{r_0}^{r_1}}
\newcommand{\rol}{[r_0,r_1]}
\newcommand{\ro}{(r_0,r_1)}
\newcommand{\hj}{\mathcal{J}}
\begin{document}

\title{{\LARGE Radial solutions of the hydrodynamic model of semiconductors with sonic boundary} \footnotetext{\small *Corresponding author.}
\footnotetext{\small E-mail addresses: chenl492@nenu.edu.cn (L. Chen), ming.mei@mcgill.ca (M. Mei), zhanggj100@nenu.edu.cn (G. Zhang), zhangkj201@nenu.edu.cn (K. Zhang)}}

\author{{}\\[2mm]
\small\it School of Mathematics and Statistics, Northeast Normal University,\\
\small\it   Changchun 130024, P.R.China \\}
\author{{Liang Chen$^{a}$, Ming Mei$^{b,c}$, Guojing Zhang$^{a,*}$ and Kaijun Zhang$^{a}$}\\[2mm]
\small\it $^a$School of Mathematics and Statistics, Northeast Normal University,\\
\small\it   Changchun 130024, P.R.China \\
\small\it $^b$Department of Mathematics, Champlain College Saint-Lambert,\\
\small\it     Saint-Lambert, Quebec, J4P 3P2, Canada\\
\small\it $^c$Department of Mathematics and Statistics, McGill University,\\
\small\it     Montreal, Quebec, H3A 2K6, Canada }

\date{}

\maketitle

\begin{quote}
\small \textbf{Abstract}:
The purpose of this paper is to study radial solutions for steady hydrodynamic model of semiconductors represented by Euler-Poisson equations with sonic boundary.
The existence and uniqueness of radial subsonic solution, and the existence of radial supersonic solutions are derived by using the energy method and the compactness method, but under a general condition of the doping profile. In particular, for radial supersonic solutions, it is more difficult  to get the related estimates by the effect of high dimensional space and the sonic boundary, so we apply a special iteration to complete the proofs. The results obtained essentially improve and develop the previous studies in the one-dimensional case.

\indent \textbf{Keywords}: Euler-Poisson equations; hydrodynamic model of semiconductors; sonic boundary; radial subsonic solution; radial supersonic solution.

\indent \textbf{AMS Subject Classification}: 35R35; 35Q35; 76N10; 35J70


\end{quote}


\section{Introduction}\label{Sect.1}

The hydrodynamic model of semiconductors, first introduced by Bl{\o}tekj{\ae}r \cite{Bl70}, usually characterizes the motion of the charged fluid particles such as electrons and holes in semiconductor devices \cite{Ma90}.
This paper is a follow-up  of our series of study \cite{Ch20,Li17,Li18} on the Euler-Poisson equations of semiconductor models subjected to sonic boundary. Different from \cite{Ch20,Li17,Li18} on 1-D equations with sonic boundary, here we are mainly interested in the multiple-dimensional Euler-Poisson system  \cite{Ba16,De93} as follows:
\begin{equation} \label {1.1}
\begin{cases}
\rho_t+\text{div}(\rho \mathbf{u})=0, \\
(\rho \mathbf{u})_t+\text{div}(\rho \mathbf{u}\otimes \mathbf{u}+PI_n)=\rho\nabla \Phi-\dfrac{\rho \mathbf{u}}{\tau},\\
\Delta\Phi=\rho-b(x).
\end{cases}
(x,t)\in \mathbb{R}^n\times\mathbb{R}^+,n=2,3,
\end{equation}
Here $\rho$, $\mathbf{u}$ and $\Phi$ denote the electron density, the velocity and the electrostatic potential, respectively. $I_n$ is the $n\times n$ identity matrix, the constant $\T>0 $ is the momentum relaxation time and the function $b(x)>0$ is the doping profile standing for the density of positively charged background ions. $P(\rho)$ is known as the pressure-density relation. As usual, for isentropic flows,
  $P(\rho) = \kappa\rho^{\gamma}$, $\kappa> 0$ with the adiabatic exponent $\gamma>1$; for isothermal flows, $P=T\rho$ with the constant temperature $T>0$. In present paper, we consider the isothermal case, and set $T=1$ without loss of generality, i.e.
\begin{equation*}
P(\rh)=\rh.
\end{equation*}

Throughout this paper, we consider the steady-state solutions of \eqref{1.1} in an annulus  domain
\[
\mathcal{A}:=\{x\in \mathbb{R}^n| r_0< |x|<r_1\},  \ \ \ 0<r_0<r_1,
\]
with the inner boundary
\begin{equation*}
\Gamma_0:=\{x\in\R^n: |x|=r_0\},
\end{equation*}
and the outer boundary
\[
\Gamma_1:=\{x\in\R^n:|x|=r_1\},
\]
Its closure is denoted by
\[
\overline{\mathcal{A}}:=\Gamma_0\cup\mathcal{A}\cup\Gamma_1.
\]
Note that
\begin{equation*}
\text{div}(\rho \mathbf{u}\otimes \mathbf{u})=\rho(\mathbf{u}\cdot\nabla)\mathbf{u}+\text{div}(\rho \mathbf{u})\cdot \mathbf{u},
\end{equation*}
and set $E:=\nabla \Phi$ (the electric field), then  the corresponding stationary equations of \eqref{1.1} can be written as
\begin{equation}\label{1.2}
\begin{cases}
\text{div}(\rho \mathbf{u})=0,\\
(\mathbf{u}\cdot\nabla)\mathbf{u}+\dfrac{\nabla \rho}{\rho}= E-\dfrac{\mathbf{u}}{\tau},\\
\text{div} ~E=\rho-b(x).
\end{cases}
x\in \mathcal{A},
\end{equation}

The aim of our work is to investigate the structure of the steady-state solutions to \eqref{1.2}, particularly, the radial subsonic/supersonic solutions of \eqref{1.2}  in two and three dimensional annulus domains with sonic boundary, and to study various analytical features including the requirement of the doping profile and the adopted methods in the proofs by comparing with the one-dimensional case \cite{Li17}.

Additionally, we call $M:=\frac{|\mathbf{u}|}{c(\rho)}$ the {\it Mach number} for $c(\rho) :=\sqrt{P'(\rho)}=1$. Here, $c(\rho)$ is called the {\it local sound speed}.  Depending on the size of $M$, the analytic features of \eqref{1.2} vary:  if $M>1$,  the stationary flow is called {\it supersonic}; if $M<1$, the corresponding flow is called {\it subsonic}; otherwise, $M=1$ is the sonic state.

In what follows, we assume that $\tilde{b}$ is in $L^{\infty}\ro$ such that $b(x):=\tilde{b}(r)$ in $\overline{\mathcal{A}}$, and we denote
\begin{equation}\label{a1.34}
\begin{split}
&(\rho,\mathbf{u},E)(x):=(\tilde{\rh}(r),\tilde{u}(r)\vec{\mathbf{e}}, \tilde{E}(r)\vec{\mathbf{e}}),
\end{split}
\end{equation}
where $r=|x|$, and  $\vec{\mathbf{e}}:=\dfrac{x}{r}$ is a unit vector,
and we prescribe the boundary conditions as follows:
\begin{equation}\label{a1.2}
(\rho|_{\Gamma_0}, \rho|_{\Gamma_1}, \rho \mathbf{u}|_{\Gamma_0})=(\tilde{\rho}(r_0),\tilde{\rho}(r_1), \tilde{\rho}(r_0)\tilde{u}(r_0)\vec{\mathbf{e}})=(\rho_0, \rho_1,j_0\vec{\mathbf{e}})
\end{equation}
for positive constants $(\rho_0, \rho_1 ,j_0)$. Therefore, \eqref{1.2} and \eqref{a1.2} is reduced to
\be\label{b1.5}
\begin{cases}
(r^{n-1}\tilde{\rho} \tilde{u})_r=0,\\
(r^{n-1}\tilde{\rho}\tilde{u}^2)_r+r^{n-1}\tilde{\rho}_r=r^{n-1}\tilde{\rho}(\tilde{E}-\frac{
\tilde{u}}{\T}),\\
(r^{n-1}\tilde{E})_r =r^{n-1}(\tilde{\rho}-\tilde{b}(r)),\\
(\tilde{\rh}(r_0),\tilde{\rh}(r_1),\tilde{u}(r_0))=(\rh_0,\rh_1, j_0/\rh_0),
\end{cases}
\quad \text{for}\quad r_0<r<r_1,
\ee
so that the sonic state is redefined by $|\tilde{u}|=M=1$.
Clearly, each pair of the solution $(\tilde{\rh},\tilde{u},\tilde{E})$ to system \eqref{b1.5} always corresponds to a solution  $(\rho, \mathbf{u},  E)$ to \eqref{1.2} and \eqref{a1.2}.

\begin{definition}[radial subsonic/supersonic solution]\label{de1.1}
We call $(\rho, \mathbf{u},  E)$ with $M<1$ $(M>1)$ in $\mathcal{A}$ radial subsonic (correspondingly, supersonic) to system \eqref{1.2} and \eqref{a1.2} if the corresponding solution $(\tilde{\rh},\tilde{u},\tilde{E})$ of \eqref{b1.5} satisfies $|\tilde{u}|<1$ $(|\tilde{u}|>1)$ over $\ro$.
\end{definition}

We now focus on \eqref{b1.5}. Let
$\tilde{J}:=\tilde{\rh}\tilde{u}$. Without loss of generality,  let us also take $\tilde{J}>0$.
From the first equation of \eqref{b1.5} we have
\be\label{a1.32}
\tilde{J}(r)=j_0\cdot\frac{r_0^{n-1}}{r^{n-1}},\qquad  r\in[r_0,r_1].
\ee
By \eqref{b1.5} and \eqref{a1.32}, we can impose the sonic boundary conditions to \eqref{b1.5} by
\begin{equation}\label{a1.33}
  \rho_0=j_0 \quad \text{and}\quad\rho_1=j_0\cdot\frac{r_0^{n-1}}{r_1^{n-1}}.
\end{equation}
By dividing the second equation of \eqref{b1.5} by $\tilde{\rh}$ and differentiating the resulting equation with respect to $r$, and using the third equation of \eqref{b1.5},  we obtain
\begin{equation}\label{1.6}
\begin{cases}
   \left[r^{n-1}\left(\left(\dfrac{1}{\tilde{\rho}}- \dfrac{\tilde{J}^2}{\tilde{\rho}^3}\right)\tilde{\rho}_r
  -\dfrac{n-1}{r}\dfrac{\tilde{J}^2}{\tilde{\rho}^2}+
  \dfrac{\tilde{J}}{\T\tilde{\rho}}\right)\right]_r=
   r^{n-1}(\tilde{\rho}-\tilde{b}), \qquad r\in\ro,\\
\rho_0=j_0,\quad\rho_1=j_0\cdot\frac{r_0^{n-1}}{r_1^{n-1}}.
\end{cases}
\end{equation}
In order to classify the radial solutions, it is convenient to introduce a new variable
\bc
m(r):=r^{n-1}\tilde{\rh}(r), \quad r\in\rol,
\ec
with a parameter $ \mathcal{J}:=j_0r_0^{n-1}>0$.
Thus, by \eqref{a1.32}, it implies that
\begin{equation}\label{b1.9}
\tilde{J}=\frac{\hj}{r^{n-1}}\quad \text{and}\quad \tilde{\rh}=\frac{m}{r^{n-1}},
\end{equation}
 then \eqref{1.6} is reduced to
\be\label{a1.4}
\begin{cases}
\left[r^{n-1}\left(\dfrac{1}{m}-\dfrac{\mathcal{J}^2}{m^3}\right)m_r+\dfrac{r^{n-1}\mathcal{J}}{\T m}\right]_r=m-B(r)+r^{n-3}(n-1)(n-2), \ \ \ r\in \ro,\\
m(r_0)=m(r_1)=\mathcal{J},
\end{cases}
\ee
where the function $B$ is defined by $B(r):=r^{n-1}\tilde{b}(r)$ on $\rol$.
Obviously,  $m>\mathcal{J}$ means that the flow is subsonic; correspondingly, $0<m<\mathcal{J}$ stands for the supersonic flow. Moreover, equation \eqref{a1.4} is elliptic but degenerate at the boundary, that causes us essential difficulties.

Now we define an interior subsonic/supersonic solution of \eqref{a1.4} in the weak sense, which is first introduced by \cite{Li17}.
\begin{definition}
$m(r)$ is called an interior subsonic $($correspondingly, interior supersonic$)$ solution of system \eqref{a1.4} if $m(r_0)=m(r_1)=\mathcal{J}$ and $m(r)>\mathcal{J}$ $($correspondingly, $0<m(r)<\mathcal{J})$ for $r\in(r_0,r_1)$, and $(m-\mathcal{J})^2\in H^1_0\ro$, and it holds that
$$
\intr \left[r^{n-1}\left(\frac{1}{m}-\frac{\mathcal{J}^2}{m^3}\right)m_r+\frac{r^{n-1}\mathcal{J}}{\T m}\right]\varphi_rdr+\intr (m-B(r)+r^{n-3}(n-1)(n-2))\varphi dr=0,
$$
for any $\varphi\in H_0^1(r_0,r_1)$, which is equivalent to
\begin{equation}\label{1.14}
\begin{split}
&\intr \left[r^{n-1}\frac{m+\mathcal{J}}{2m^3}((m-\hj)^2)_r+\frac{r^{n-1}\mathcal{J}}{\T m}\right]\varphi_rdr\\
&\quad+\intr (m-B(r)+r^{n-3}(n-1)(n-2))\varphi dr=0.
\end{split}
\end{equation}
\end{definition}
Once $m$ is known from \eqref{a1.4}, in view of \eqref{a1.34} and \eqref{b1.9}, ${\rh}$ and $\mathbf{{u}}$ can be determined.
Then, by the second equation of \eqref{1.2}, $\tilde{u}=\frac{\hj}{m}$ and \eqref{b1.9}, $E(x)$ is computed by
\begin{equation*}
  E(x)=\tilde{E}(r)\vec{\mathbf{e}}=\left(\tilde{u}\tilde{u}_r
  +\frac{\tilde{\rho_r}}{\tilde{\rho}}+\frac{\tilde{u}}{\T}\right)\vec{\mathbf{e}}=\left(\frac{(m+\mathcal{J})
  [(m-\mathcal{J})^2]_r}{2m^3}+\frac{\mathcal{J}}{\T m}-\frac{n-1}{r}\right)\vec{\mathbf{e}}.
\end{equation*}
Thus, finding the solution of \eqref{1.2}, \eqref{a1.2} and \eqref{a1.33} amounts to solving \eqref{a1.4}.
\begin{definition}\label{de1.3}
$(\rho, \mathbf{u},  E)$ is called a radial subsonic (correspondingly, supersonic) solution in $\mathcal{A}$ to \eqref{1.2} and \eqref{a1.2} with the sonic boundary conditions \eqref{a1.33} if the corresponding solution of \eqref{a1.4} is an interior subsonic (supersonic) solution.
\end{definition}

The study on the hydrodynamic system of semiconductors has been one of hot research spots  \cite{Ju01,Ma86,Ma91, Se89}.
  For the subsonic flows,  Degong and Markowich  \cite{De90,De93} first proved the existence and uniqueness of smooth solutions with a fully subsonic background in one dimension, and for potential flow in three dimensions, respectively; see \cite{Am01} for a non-isentropic case, and also \cite{Ba14,Ba16, Hu11, Hu2011, Li02, Su07,Su09} for more general subsonic case.  For the supersonic flows, the existence and uniqueness of supersonic solutions were studied by Peng and Violet \cite{Pe06} when the flow is strongly supersonic in the one-dimensional case. The work was extended to the two-dimensional case by Bae \cite{Du16}. Then, the transonic solutions have been a focus in the study of the stationary flows  because of the forming of shock waves, we refer to \cite{As91,Ba17,Du20, Ga92,Ga96,Lu11,Lu12, Ro05}.

For the one-dimensional case, if system \eqref{1.2} is with sonic boundary, the structures of all types of solutions for \eqref{1.2} have been intensively studied  when the doping profile is subsonic \cite{Li17}, supersonic \cite{Li18} or transonic \cite{Ch20}.
In the case of the subsonic \cite{Li17} and subsonic-dominated \cite{Ch20} doping profile, there exist a unique interior subsonic, at least one interior supersonic solution, infinitely many transonic shock solutions (the sufficiently large relaxation time, i.e. $\tau\gg1)$, and infinitely many $C^1$-smooth transonic solutions $($the sufficiently small relaxation time, i.e. $\tau\ll1)$. The approach adopted consists of the technical compactness analysis, phase plane analysis and the energy method. Of course, interior subsonic/supersonic solutions may not exist with the subsonic-dominated \cite{Ch20} doping profile if the relaxation time is small enough.  On the other hand, under the supersonic \cite{Li18} and supersonic-dominated \cite{Ch20} doping profile, the non-existence of all types of the solutions can be obtained. However, the existence of supersonic and transonic shock solutions can be proved in an extreme case, where  the doping profile is close to the sonic line and the semiconductor effect is small $(\tau\gg1)$.

Inspired by our previous studies mentioned above, we expect to establish the well-posedness of the solutions for high dimensional system with sonic boundary. Physically speaking, it is hard to put forward an acceptable critical boundary in a general domain, such as a flat nozzle.
 Therefore, we first pay attention to radial solutions of \eqref{1.2} in an annulus domain.
The work of this paper is to show that, given constant date $(\rh_0,j_0)$ at the inner boundary $\Gamma_0$ and constant density $\rh_1$ at the outer boundary $\Gamma_1$,  there exist a unique radial subsonic solution and at least one radial supersonic solution to \eqref{1.2} and \eqref{a1.2} with the sonic boundary conditions \eqref{a1.33}.

Since it is complicated to solve \eqref{1.2} directly, by Definition \ref{de1.3}, we turn to consider the interior subsonic/supersonic solutions of \eqref{a1.4} in the bounded interval.
Unsurprisingly, there still exist a unique interior subsonic solution and at least one interior supersonic solution to \eqref{a1.4}.
Afterwards, there are two different features from the one-dimensional case: the first key is that the requirement of the doping profile actually become more general, namely, the lower bound of the doping profile may be smaller than the sonic curve; the second finding is that a two-steps iteration, replacing the one-step iteration, is established to prove the existence of interior supersonic solutions for \eqref{a1.4}.

Throughout this paper we denote
 \begin{equation*}
 \underline{B}=\mathop{\mathrm{essinf}}\limits_{r\in\rol} B(r)\quad\text{and}\quad
   \overline{B}:=\mathop{\mathrm{esssup}}\limits_{r\in\rol} B(r),
 \end{equation*}
and also define
\begin{equation*}
\underline{\mathcal{B}}:=\inf\limits_{r\in\rol}\left\{B(r)+\dfrac{2r}{\T}-2\right\}
  \quad\text{and}\quad \overline{\mathcal{B}}:=\sup\limits_{r\in\rol}\left\{B(r)+\dfrac{2r}{\T}-2\right\},
\end{equation*}
 which is necessary to prove the existence of the solutions in the three-dimensional case.

Now we state our main results about interior subsonic/supersonic solutions to \eqref{a1.4} as follows.
\begin{theorem}[Interior subsonic solutions] \label{t1.1}
\begin{enumerate}
\item The case of $n=2$: Let $B(r)\in L^{\infty}(r_0,r_1)$ and $\underline{B}\leq B(r)\leq \overline{B}$ satisfying $\overline{B}+\dfrac{1}{\T}> \mathcal{J}$ and $\underline{B}+\dfrac{\mathcal{J}}{\T(\overline{B}+1/\T)}> \mathcal{J}$, then system \eqref{a1.4} admits a unique interior subsonic solution $m(r)$ over $\rol$. Further, $m\in C^{\frac{1}{2}}\rol$ satisfies a lower bound estimate
\begin{equation*}
  m(r)\geq \mathcal{J}+\lambda\sin \left(\pi\cdot\frac{r-r_0}{r_1-r_0}\right),\quad r\in\rol,
\end{equation*}
where $\lambda$ is a small and positive constant.
\item The case of $n=3$:  Let $\overline{\mathcal{B}}> \mathcal{J}$ and $\min\limits_{r\in\rol}\left(B(r)+\dfrac{2 r \mathcal{J}}{\T \overline{\mathcal{B}}}-2\right)> \mathcal{J}$, then equation \eqref{a1.4} has a unique interior subsonic solution $m$ satisfying $m\in C^{\frac{1}{2}}\rol$ and
\begin{equation*}
  m(r)\geq \mathcal{J}+\bar{\lambda}\sin \left(\pi\cdot\frac{r-r_0}{r_1-r_0}\right),\quad r\in\rol,
\end{equation*}
where the constant $\bar{\lambda}>0$ is also small.
\end{enumerate}
\end{theorem}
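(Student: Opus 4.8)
The plan is to handle $n=2$ and $n=3$ in parallel, noting that the only structural difference is the source $r^{n-3}(n-1)(n-2)$, which vanishes for $n=2$ and equals the constant $2$ for $n=3$; the latter amounts to the shift $B(r)\mapsto B(r)-2$ together with a genuine $r$-weight that will force the replacement of $B$ by $\mathcal B(r):=B(r)+\tfrac{2r}{\T}-2$ in the barrier bounds. I would first regularize the degeneracy: solve \eqref{a1.4} with lifted boundary data $m(r_0)=m(r_1)=\mathcal J+\varepsilon$ and with the ellipticity coefficient $\tfrac1m-\tfrac{\mathcal J^2}{m^3}$ truncated to stay between two positive constants on the expected range $[\mathcal J,\overline M]$. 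The regularized problem is uniformly elliptic and non-degenerate, so its solvability follows from a standard Leray--Schauder fixed-point or direct variational argument on $H^1\ro$; the substantive content is to obtain estimates uniform in $\varepsilon$.

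The heart of the matter is the a priori bounds, and this is exactly where the hypotheses on $B$ enter. Evaluating the equation at an interior maximum of $m_\varepsilon$ (where $m_r=0$, $m_{rr}\le 0$, and the truncated coefficient is positive) gives, for $n=2$, $m_\varepsilon\le \overline B+\tfrac{\mathcal J}{\T m_\varepsilon}\le \overline B+\tfrac1\T$, so that $\overline B+\tfrac1\T>\mathcal J$ is precisely the requirement that a subsonic window exists; the analogous computation for $n=3$ produces the ceiling $\overline{\mathcal B}$. Feeding this ceiling back into the equation at an interior minimum yields the floor $m_\varepsilon\ge \underline B+\tfrac{\mathcal J}{\T(\overline B+1/\T)}$ (respectively $\min_r\bigl(B+\tfrac{2r\mathcal J}{\T\overline{\mathcal B}}-2\bigr)$), and the second hypothesis is exactly the demand that this floor exceed $\mathcal J$, i.e. that the solution stay strictly subsonic. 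I would then sharpen the floor near the endpoints by a comparison argument using the first Dirichlet eigenfunction $\sin\bigl(\pi\tfrac{r-r_0}{r_1-r_0}\bigr)$, which is a natural subsolution: testing against it gives $m_\varepsilon\ge \mathcal J+\lambda\sin(\cdots)$ with $\lambda>0$ uniform in $\varepsilon$. Finally, testing the weak form with $m_\varepsilon-\mathcal J$ turns the principal term into the weighted energy $\intr r^{n-1}\tfrac{(m+\mathcal J)(m-\mathcal J)}{m^3}m_r^2\,dr$; since $((m-\mathcal J)^2)_r=2(m-\mathcal J)m_r$ and $m$ is bounded, this controls $(m-\mathcal J)^2$ in $H^1\ro$ uniformly in $\varepsilon$, whence the claimed $C^{1/2}$ regularity.

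With these uniform bounds I would pass to the limit $\varepsilon\to0$ by compactness: the floor keeps $m$ strictly above $\mathcal J$ on every compact subset, so the truncation becomes inactive in the limit, while the $H^1$ control of $(m-\mathcal J)^2$ lets one identify the degenerate nonlinear flux in \eqref{1.14} and recover an interior subsonic solution. Uniqueness I would get by the energy method: subtracting the weak formulations of two solutions $m_1,m_2$, rewriting the principal part through the strictly increasing potential $G(m)=\ln m+\tfrac{\mathcal J^2}{2m^2}$ (so $G'(m)=\tfrac1m-\tfrac{\mathcal J^2}{m^3}>0$ for $m>\mathcal J$), and testing with $\varphi=G(m_1)-G(m_2)\in H^1_0\ro$ produces a nonnegative Dirichlet term together with the monotone term $\intr(m_1-m_2)\bigl(G(m_1)-G(m_2)\bigr)\ge 0$; the only indefinite contribution is the relaxation term in $\tfrac{\mathcal J}{\T}\bigl(\tfrac1{m_1}-\tfrac1{m_2}\bigr)$, which is dominated by Cauchy--Schwarz and absorbed using the interior lower bound on $m$.

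The step I expect to be the main obstacle is the boundary degeneracy. Because the ellipticity coefficient collapses as $m\to\mathcal J$, every estimate and the identification of the nonlinear flux must be arranged so as to survive the limit $\varepsilon\to0$; the delicate point is to make the sine barrier and the weighted energy bound uniform right up to the sonic endpoints rather than only on compact interior subintervals. The case $n=3$ adds a secondary difficulty, since the non-vanishing source and the true $r$-weight prevent the bounds from being read off a constant comparison state and instead force the use of the modified doping quantities $\underline{\mathcal B},\overline{\mathcal B}$.
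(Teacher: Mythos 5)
Your existence argument follows essentially the same skeleton as the paper: regularize the degenerate coefficient, obtain the ceiling $N=\overline B+1/\T$ (resp.\ $\overline{\mathcal B}$) and the strict floor from the two hypotheses via a maximum-principle computation, sharpen the floor with the sine subsolution and a comparison principle, derive a weighted energy bound by testing with $m-\mathcal J$, and pass to the limit. The paper regularizes differently --- it keeps the sonic boundary data and replaces $\mathcal J^2$ by $j^2<\mathcal J^2$ in the coefficient, so that $\tfrac1m-\tfrac{j^2}{m^3}\ge(\mathcal J^2-j^2)/m^3>0$ uniformly on $\{m\ge\mathcal J\}$ --- which is cleaner than your boundary lift plus truncation, but both are workable; likewise the paper phrases the barrier bounds through the weak maximum principle on a linearized problem rather than pointwise at interior extrema, and carries the energy estimate on $(m_j-\mathcal J)^{3/2}$ rather than $(m_j-\mathcal J)^2$, which are equivalent given the $L^\infty$ bound.

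The genuine gap is in your uniqueness proof. Testing the difference of the two weak formulations with $\varphi=G(m_1)-G(m_2)$, $G'(m)=\tfrac1m-\tfrac{\mathcal J^2}{m^3}$, the relaxation term $\tfrac{\mathcal J}{\T}\intr r^{n-1}\bigl(\tfrac1{m_1}-\tfrac1{m_2}\bigr)\varphi_r\,dr$ produces, after Young's inequality, a term of size $C\intr(m_1-m_2)^2\,dr$ with a \emph{fixed} constant $C=C(\mathcal J,\T,r_1)$. The only monotone term available to absorb it is
\begin{equation*}
\intr (m_1-m_2)\bigl(G(m_1)-G(m_2)\bigr)\,dr=\intr G'(\xi)\,(m_1-m_2)^2\,dr,
\end{equation*}
whose weight $G'(\xi)=(\xi^2-\mathcal J^2)/\xi^3$ vanishes as $m\to\mathcal J$ at the sonic endpoints; the sine barrier only gives $G'(\xi)\gtrsim\sin\bigl(\pi\tfrac{r-r_0}{r_1-r_0}\bigr)$ there, so the unweighted $L^2$ term cannot be dominated near $r_0$ and $r_1$, and the Poincar\'e inequality on $\varphi$ does not help because $|\varphi|$ carries the same degenerate factor. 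Integrating the relaxation term by parts instead would require derivative control on $m_i$ that is not available up to the boundary. The paper avoids this entirely: it sets $w=(m-\mathcal J)^2$, upgrades $w$ to $C^{1+1/4}\rol$, and runs a flux-comparison (shooting-type) argument --- on a maximal subinterval where $w_1>w_2$ with equality at its endpoints, the $C^1$ touching conditions force $G_{w_1}\le G_{w_2}$ at the right endpoint, while the integrated equation $G_w(r)=G_w(r_0)+\int(\sqrt w+\mathcal J-B)$ together with $\sqrt{w_1}>\sqrt{w_2}$ forces $G_{w_1}<G_{w_2}$ at the left endpoint, contradicting $(w_1)_r\ge(w_2)_r$ there. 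You would need either to adopt an argument of this type or to supply a degenerate-weighted Hardy inequality to close your monotonicity estimate; as written, the absorption step fails.
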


\begin{theorem}[Interior supersonic solutions]\label{t1.2}
\begin{enumerate}
\item The case of $n=2$: Assume that $\underline{B}+\dfrac{1}{\T}> \mathcal{J}$, then system \eqref{a1.4} has at least one interior supersonic solution $m\in C^{1/2}\rol$ satisfying $\ell\leq m(r)< \mathcal{J}$ over $\ro$ for a positive constant $\ell$.
\item The case of $n=3$:
Suppose that $\underline{\mathcal{B}}> \mathcal{J}$, then there exists an interior supersonic solution $m\in C^{1/2}\rol$ to system \eqref{a1.4} satisfying $\bar{\ell}\leq m(r)< \mathcal{J}$ over $\ro$ for a positive constant $\bar{\ell}$.
\end{enumerate}
\end{theorem}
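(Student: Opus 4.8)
The plan is to treat both dimensions at once through the equivalent weak formulation \eqref{1.14}, working throughout with the transformed unknown $w:=(m-\mathcal{J})^2\ge0$. The point of \eqref{1.14} is that its principal coefficient $r^{n-1}\frac{m+\mathcal{J}}{2m^3}$ is strictly positive, so the equation for $w$ is genuinely elliptic even though the coefficient $\frac{1}{m}-\frac{\mathcal{J}^2}{m^3}$ of $m_r$ in \eqref{a1.4} is negative on the supersonic branch $m<\mathcal{J}$. On that branch $m=\mathcal{J}-\sqrt{w}$, and the sought solution should lie in
\[
\mathcal{S}_\ell:=\Big\{m:\ \ell\le m\le\mathcal{J}\ \text{on}\ \rol,\ (m-\mathcal{J})^2\in H^1_0\ro,\ m(r_0)=m(r_1)=\mathcal{J}\Big\},
\]
for some constant $\ell>0$. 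Since the coefficients $\mathcal{J}^2/m^3$ and $\mathcal{J}/(\T m)$ blow up as $m\to0$, everything hinges on confining the construction to $\mathcal{S}_\ell$, and this is what forces the nonstandard iteration.

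First I would regularize by truncating $m$ into $[\ell,\mathcal{J}]$ and build approximate solutions by a nested two-step iteration, in place of the single fixed-point map that suffices in the one-dimensional setting of \cite{Li17}. In the inner step one freezes the nonlinear coefficient $\frac{m_k+\mathcal{J}}{2m_k^3}$ and the lower-order data at the current iterate $m_k$ and solves the resulting linear, uniformly elliptic two-point problem for $w_{k+1}$; in the outer step one recovers $m_{k+1}=\mathcal{J}-\sqrt{w_{k+1}}$, truncates back into $[\ell,\mathcal{J}]$, and updates the coefficient. The extra outer step is forced by the dimension: the weight $r^{n-1}$ and the zeroth-order term $r^{n-3}(n-1)(n-2)$ spoil the comparison structure that lets the one-dimensional scheme close in a single step, so the invariant region has to be preserved by the projection rather than by a comparison principle alone.

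The heart of the argument is the pair of uniform bounds $\ell\le m<\mathcal{J}$ on $\ro$, independent of the regularization and the iteration. The upper bound $m<\mathcal{J}$ (equivalently $w>0$ in the interior) certifies that the flow is genuinely supersonic and comes from using the constant $m\equiv\mathcal{J}$ as a barrier. Writing $R[m]$ for the residual obtained by moving the right-hand side of \eqref{a1.4} to the left, a direct computation gives $R[\mathcal{J}]=B+\frac1\T-\mathcal{J}$ when $n=2$ and $R[\mathcal{J}]=B+\frac{2r}{\T}-2-\mathcal{J}$ when $n=3$, which are strictly positive exactly by the hypotheses $\underline{B}+\frac1\T>\mathcal{J}$ and $\underline{\mathcal{B}}>\mathcal{J}$. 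This is why the shifted quantity $\underline{\mathcal{B}}=\inf_{\rol}\{B+\frac{2r}{\T}-2\}$ is the natural object in three dimensions: its terms $\frac{2r}{\T}$ and $-2$ arise respectively from differentiating $\frac{r^2\mathcal{J}}{\T m}$ and from the constant $r^{n-3}(n-1)(n-2)=2$.

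The uniform lower bound $m\ge\ell>0$ is the main obstacle, and it cannot be read off from a maximum principle. Evaluating \eqref{a1.4} at an interior minimum $\ell_0$ of $m$ only yields $\ell_0-B\le\frac{\mathcal{J}}{\T\ell_0}$ (for $n=2$), which is vacuous as $\ell_0\to0$, so the estimate must be global. Equivalently one must show $\max w<\mathcal{J}^2$, and I would obtain this from the energy identity produced by testing \eqref{1.14} with $\varphi=w$ itself: the good term $\intr r^{n-1}\frac{m+\mathcal{J}}{2m^3}(w_r)^2\,dr$ controls the weighted $H^1$-norm of $w$, while the forcing hypotheses are used to keep this energy below the critical threshold corresponding to vacuum, uniformly along the iteration (this uniformity is precisely what the two-step scheme secures, so that $\ell$ is stable in the limit). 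Granting these bounds I would close by compactness: the uniform $H^1_0\ro$-bound on $w$, the one-dimensional embedding $H^1\ro\hookrightarrow C^{1/2}\rol$, and Arzel\`a--Ascoli produce a limit $w$ solving \eqref{1.14} weakly with $m=\mathcal{J}-\sqrt{w}\in\mathcal{S}_\ell$. Interior elliptic regularity makes $m$ smooth where $w>0$, while near each endpoint a linear barrier gives $w\lesssim(r-r_i)$, so that $m-\mathcal{J}=-\sqrt{w}$ is H\"older-$\frac12$ up to the boundary and generically no smoother, yielding the optimal regularity $m\in C^{1/2}\rol$. The cases $n=2,3$ differ only through the weight and the zeroth-order term that are absorbed into $\underline{\mathcal{B}}$.
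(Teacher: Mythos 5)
Your proposal correctly isolates the two real difficulties (keeping the solution away from vacuum and away from the sonic value $\mathcal{J}$ in the interior), and the compactness/H\"older-$\frac12$ endgame you sketch matches the paper. But the central step --- the uniform lower bound $m\geq\ell>0$ --- is not actually established, and the mechanism you propose for it would fail. You reduce it to showing $\max w<\mathcal{J}^2$ for $w=(m-\mathcal{J})^2$ and hope to get this by ``keeping the energy below the critical threshold corresponding to vacuum.'' There is no smallness in the hypotheses ($\underline{B}+\frac1\T>\mathcal{J}$, resp.\ $\underline{\mathcal{B}}>\mathcal{J}$) that could enforce such a threshold: testing with $\varphi=w$ yields an $H^1$-bound, hence an $L^\infty$-bound, by some constant $C(\overline{B},\T,r_0,r_1)$ that has no reason to be smaller than $\mathcal{J}^2$. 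Worse, the energy identity itself contains the term $\intr\frac{r^{n-1}\mathcal{J}}{\T m}w_r\,dr$ with $\frac1m=\frac{1}{\mathcal{J}-\sqrt{w}}$ singular exactly at the vacuum you are trying to exclude, so running the estimate on the truncated problem and then removing the truncation is circular. The paper's resolution, which you miss entirely, is to regularize from the supersonic side ($k>\mathcal{J}$ in place of $\mathcal{J}$) and pass to the reciprocal unknown $v_k:=k/m_k$ with boundary value $k_0=k/\mathcal{J}>1$: the equation for $v_k$ is uniformly elliptic on $\{v_k\geq k_0\}$, the energy estimate closes to give $\|(v_k-k_0)^{3/2}\|_{H^1\ro}\leq C$ uniformly in $k$, hence $v_k\leq k_0+C^{2/3}$, and then $m_k=k/v_k\geq\frac{1}{1+C^{2/3}}=:\ell$ \emph{no matter how large $C$ is}. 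That inversion is the whole point: an arbitrarily large upper bound on $v_k$ still produces a strictly positive lower bound on $m_k$, whereas an arbitrarily large bound on $w$ produces nothing.

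Two further discrepancies. First, the ``two-steps iteration'' in the paper is not a frozen-coefficient-plus-projection scheme for the $w$-equation; it is two nested Schauder fixed points in the $v$-variable (an inner one for the fully linearized problem \eqref{a3.5} to solve the quasilinear system \eqref{2.10} with frozen $\eta$, and an outer one for $\Psi:\eta\mapsto v$), with the invariant region $k_0\leq v\leq\mathcal{M}$ secured by maximum-principle and energy arguments, not by truncation. Second, the strict interior inequality $m(r)<\mathcal{J}$ on $\ro$ is part of the claim and does not follow from using $m\equiv\mathcal{J}$ as a barrier (that only gives $m\leq\mathcal{J}$); the paper needs a separate argument (Step 3 of Theorem \ref{t2.3}), testing the limit equation with $(w-\varepsilon^2)^-$ and exploiting the concavity of $h(s)=2\mathcal{J}s+s^2+2\mathcal{J}^2\ln(\mathcal{J}-s)$ to handle the degenerate drift term. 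Your computation of the residual $R[\mathcal{J}]$ correctly explains where $\underline{\mathcal{B}}$ comes from in three dimensions, but as written the proposal has a genuine gap at the lower bound and an unproved assertion at the strict upper bound.
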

\begin{remark}{~}
\begin{enumerate}
\item If the hypotheses of Theorem \ref{t1.1} hold, we notice that subsonic solutions and  supersonic solutions of \eqref{a1.4} both exist. In addition, a higher requirement of the doping profile can be needed in three dimensional space.
\item For any fixed $r_0>0$, there exist always an interior subsonic solution and an interior supersonic solution to \eqref{a1.4} when the hypotheses of Theorem \ref{t1.1} and \ref{t1.2} are satisfied.
\item  Affected by high dimensions space, \eqref{1.2} will be recast as a nonlinear non-autonomous ODE system, which is more complex than autonomous system in one dimensional case. Thus, the transonic solutions of \eqref{a1.4} are not discussed in this paper, which will be left in the future.
\end{enumerate}
\end{remark}

Next the rest of this paper is organized as follows. The second section focuses on interior subsonic solutions of system \eqref{a1.4}. For clarity, we discuss this issue in the two-dimensional and three-dimensional cases, respectively.  Under both two cases, there exists a unique interior subsonic solution to \eqref{a1.4}. In addition, the third section  is devoted to interior supersonic solutions of \eqref{a1.4} in two and three dimensions cases. The existence of interior supersonic solutions is proved by a two-steps iteration and the Schauder fixed point theorem.

\section{Existence and uniqueness of interior subsonic solutions}
In this section, we're going to prove that there exists a unique interior subsonic solution to \eqref{a1.4} for both two-dimensional and three-dimensional cases. Here the main approach is the technical compactness method \cite{Li17}, which is inspired by the vanishing viscosity method.

{\it 2.1. the case of n=2. }  First we will prove the well-posedness of system \eqref{a1.4} in the two-dimensional case. Actually, we consider the following equation,
\be\label{2.1}
\begin{cases}
\left[r\left(\dfrac{1}{m}-\dfrac{\mathcal{J}^2}{m^3}\right)m_r+\dfrac{r \mathcal{J}}{\T m}\right]_r=m-B(r),\quad r\in\ro,\\
m(r_0)=m(r_1)=\mathcal{J}.
\end{cases}
\ee
Our main theorem in this subsection is stated below.

\begin{theorem}\label{t2.1}
Assume that $B(r)\in L^{\infty}(r_0,r_1)$ and $\underline{B}\leq B(r)\leq \overline{B}$ satisfying $\overline{B}+\dfrac{1}{\T}> \mathcal{J}$ and $\underline{B}+\dfrac{\mathcal{J}}{\T(\overline{B}+1/\T)}>\mathcal{J}$, then we have a unique weak solution $m$ to \eqref{2.1} satisfying $m\in C^{\frac{1}{2}}\rol$ and
\begin{equation*}
  m(r)\geq \mathcal{J}+\lambda\sin \left(\pi\cdot\frac{r-r_0}{r_1-r_0}\right),\quad r\in\rol,
\end{equation*}
where $\lambda$ is a small and positive constant.
\end{theorem}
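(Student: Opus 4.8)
The plan is to construct the solution of the degenerate problem \eqref{2.1} as a vanishing--viscosity limit, in the spirit of the compactness method of \cite{Li17}. The whole difficulty is concentrated at the sonic boundary, where the diffusion coefficient $\frac1m-\frac{\mathcal J^2}{m^3}=\frac{(m-\mathcal J)(m+\mathcal J)}{m^3}$ vanishes as $m\to\mathcal J$; this is why the right unknown to estimate is $(m-\mathcal J)^2$, whose gradient is precisely what enters the flux $r\frac{m+\mathcal J}{2m^3}\big((m-\mathcal J)^2\big)_r$ of the weak form \eqref{1.14}. First I would truncate the coefficients $\frac1m-\frac{\mathcal J^2}{m^3}$ and $\frac1m$ outside the band $\mathcal J\le m\le \overline B+1/\T+1$, making them bounded and Lipschitz, then add an artificial viscosity $\varepsilon$ and lift the boundary datum, solving
\be
\begin{cases}
\left[\Big(\varepsilon+r\big(\tfrac1m-\tfrac{\mathcal J^2}{m^3}\big)\Big)m_r+\dfrac{r\mathcal J}{\T m}\right]_r=m-B(r), & r\in\ro,\\
m(r_0)=m(r_1)=\mathcal J+\varepsilon.
\end{cases}
\ee
For each fixed $\varepsilon$ this problem is uniformly elliptic with ellipticity constant $\varepsilon$, so a smooth solution $m_\varepsilon$ is obtained by freezing $m$ in the nonlinear coefficients, solving the linear problem, and applying the Leray--Schauder fixed-point theorem together with the uniform bounds described next.

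The heart of the proof is a family of $\varepsilon$-independent a priori bounds. For the upper bound, the constant $\overline m:=\overline B+1/\T$ is a supersolution: its flux $\frac{r\mathcal J}{\T\overline m}$ has derivative $\frac{\mathcal J}{\T\overline m}$, which is $\le\overline m-B$ exactly when $\mathcal J/\overline m\le1$, i.e. under the hypothesis $\overline B+1/\T>\mathcal J$, so comparison yields $m_\varepsilon\le\overline m$. For the lower bound I would use the barrier $\underline m(r):=\mathcal J+\lambda\sin\!\big(\pi\frac{r-r_0}{r_1-r_0}\big)$ and verify, for a small $\lambda$ independent of $\varepsilon$, that it is a subsolution; here the drift $\frac{\mathcal J}{\T m}$ is bounded below using the upper bound $m_\varepsilon\le\overline m$, which is exactly the reason the sharp condition $\underline B+\frac{\mathcal J}{\T(\overline B+1/\T)}>\mathcal J$ --- rather than the weaker $\underline B+1/\T>\mathcal J$ --- is what makes $\underline m$ admissible and forces $m_\varepsilon\ge\underline m>\mathcal J$ in $\ro$. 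With these two barriers the truncation becomes inactive and the principal part stays strictly positive in the interior. Testing the equation with $m_\varepsilon-\mathcal J-\varepsilon\in H^1_0\ro$ and integrating the drift by parts gives $\intr r\big(\tfrac1{m_\varepsilon}-\tfrac{\mathcal J^2}{m_\varepsilon^3}\big)m_{\varepsilon,r}^2\,dr\le C$, and since $m_\varepsilon+\mathcal J\ge2\mathcal J$ while $m_\varepsilon\le\overline m$, this controls $\intr\big|\big((m_\varepsilon-\mathcal J)^2\big)_r\big|^2\,dr$, i.e. a uniform $H^1_0$ bound on $(m_\varepsilon-\mathcal J)^2$.

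From here the passage to the limit is routine. The uniform $H^1$ bound and the compact embedding $H^1_0\ro\hookrightarrow\hookrightarrow C^{0}\rol$ give $(m_\varepsilon-\mathcal J)^2\to g$ uniformly along a subsequence, hence $m_\varepsilon=\mathcal J+\sqrt{(m_\varepsilon-\mathcal J)^2}\to m:=\mathcal J+\sqrt g$ uniformly, with $\mathcal J\le m\le\overline m$ and the sine lower bound preserved; the $\varepsilon$-term disappears because $\|\varepsilon m_{\varepsilon,r}\|_{L^2}\le\sqrt\varepsilon\,\|\sqrt\varepsilon\,m_{\varepsilon,r}\|_{L^2}\to0$. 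Weak convergence of $\big((m_\varepsilon-\mathcal J)^2\big)_r$ together with uniform convergence of the bounded coefficient $\frac{m_\varepsilon+\mathcal J}{2m_\varepsilon^3}$ lets me pass to the limit in \eqref{1.14} (with $n=2$, so the $r^{n-3}(n-1)(n-2)$ term is absent), showing $m$ is an interior subsonic solution. The regularity $m\in C^{1/2}\rol$ I would get not from the square root of an $H^1$ function, but from the flux itself: since $F:=r\big(\tfrac1m-\tfrac{\mathcal J^2}{m^3}\big)m_r+\frac{r\mathcal J}{\T m}$ satisfies $F_r=m-B\in L^\infty$, the flux is Lipschitz, hence $\big((m-\mathcal J)^2\big)_r$ is bounded, $(m-\mathcal J)^2$ is Lipschitz, and the square root of a nonnegative Lipschitz function is $C^{1/2}$.

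Uniqueness follows from the monotone structure once the degeneracy is tamed. Writing the principal part through the Kirchhoff primitive $v:=\ln m+\frac{\mathcal J^2}{2m^2}$ (so that $r(\tfrac1m-\tfrac{\mathcal J^2}{m^3})m_r=r v_r$), for two solutions $m_1,m_2$ I would subtract the two weak forms and test with $v_1-v_2\in H^1_0\ro$; the principal term $\intr r(v_1-v_2)_r^2$ and the reaction term $\intr(m_1-m_2)(v_1-v_2)$ are both nonnegative because $v$ is increasing in $m$, while the only sign-indefinite contribution, coming from the drift $\frac{r\mathcal J}{\T m}$, is absorbed by Young's inequality and the lower barrier that keeps the coefficients non-degenerate in the interior. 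I expect the genuine obstacle of the whole theorem to be this control of the degeneracy at the sonic boundary: producing a lower barrier that is simultaneously sharp enough to encode the precise doping condition $\underline B+\frac{\mathcal J}{\T(\overline B+1/\T)}>\mathcal J$, strong enough to keep $m$ strictly subsonic in the interior, and compatible with the degenerate energy estimate needed for compactness and uniqueness.
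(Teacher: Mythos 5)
Your existence argument follows essentially the same route as the paper: a regularized uniformly elliptic problem, the constant upper barrier $\overline{B}+1/\T$, the lower barrier $\mathcal{J}+\lambda\sin\bigl(\pi\frac{r-r_0}{r_1-r_0}\bigr)$ exploiting exactly the hypothesis $\underline{B}+\frac{\mathcal{J}}{\T(\overline{B}+1/\T)}>\mathcal{J}$, a degenerate energy estimate giving a uniform $H^1_0$ bound on $(m-\mathcal{J})^2$, and compactness. The only difference there is cosmetic: the paper regularizes by replacing $\mathcal{J}^2$ with $j^2$, $j<\mathcal{J}$, in the coefficient (so the approximate problems keep the boundary datum $\mathcal{J}$ and are uniformly elliptic on $\{m\ge\mathcal{J}\}$ without truncation), whereas you add an $\varepsilon$-viscosity and lift the datum; both work, and your $C^{1/2}$ argument via the Lipschitz flux is the same as the paper's.

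The genuine gap is in your uniqueness step. Testing the difference of the two weak formulations with $v_1-v_2$, $v=\ln m+\frac{\mathcal{J}^2}{2m^2}$, you obtain
\begin{equation*}
\intr r\,|(v_1-v_2)_r|^2\,dr+\intr (m_1-m_2)(v_1-v_2)\,dr
=-\frac{\mathcal{J}}{\T}\intr r\Bigl(\frac{1}{m_1}-\frac{1}{m_2}\Bigr)(v_1-v_2)_r\,dr,
\end{equation*}
and after Young's inequality the right-hand side leaves a term of the form $\frac{C}{\T^2}\intr |m_1-m_2|^2\,dr$ with \emph{no} degenerate weight. The left-hand side cannot dominate it: since $v'(m)=\frac{(m-\mathcal{J})(m+\mathcal{J})}{m^3}$ vanishes at the sonic boundary and the lower barrier only gives $m_i-\mathcal{J}\gtrsim\lambda\min(r-r_0,r_1-r)$, the reaction term is only $\gtrsim\lambda\intr\min(r-r_0,r_1-r)\,|m_1-m_2|^2\,dr$, and $|v_1-v_2|\simeq v'(\xi)|m_1-m_2|$ degenerates at the same rate, so neither a Poincar\'e nor a Hardy inequality applied to the principal term recovers the unweighted $L^2$ norm of $m_1-m_2$ near $r_0$ and $r_1$. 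Even away from the boundary there is no smallness in the constant $C/\T^2$ to play against the reaction coefficient, so the absorption does not close for small $\T$. This is precisely why the paper abandons the energy/monotonicity method for uniqueness and instead uses a one-dimensional flux argument: it first upgrades $w=(m-\mathcal{J})^2$ to $C^1\rol$, then supposes $w_1>w_2$ on a maximal subinterval $[\bar r_0,\bar r_1]$ with equal endpoint values, reads off $(w_1)_r(\bar r_0)\ge (w_2)_r(\bar r_0)$ and $(w_1)_r(\bar r_1)\le(w_2)_r(\bar r_1)$, and derives a contradiction from the identity $G_w(r)=G_w(r_0)+\int_{r_0}^{r}(\sqrt{w}+\mathcal{J}-B)\,ds$ and the strict monotonicity of $w\mapsto\sqrt{w}$. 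You would need to either adopt that argument or supply a genuinely new mechanism to handle the degeneracy of $v'$ at the endpoints; as written, the uniqueness claim is not established.
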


Since \eqref{2.1} is elliptic in $\ro$ but degenerates at the boundary, we can't directly work on it. Therefore, we first consider the approximate equation of \eqref{2.1} as follows:
\be\label{a1.5}
\begin{cases}
\left[r\left(\dfrac{1}{m_j}-\dfrac{j^2}{(m_j)^3}\right)(m_j)_r+\dfrac{r \hj}{\T m_j}\right]_r=m_j-B(r),\quad r\in \ro,\\
m_j(r_0)=m_j(r_1)=\mathcal{J},
\end{cases}
\ee
where the parameter $j$ is a constant such that $0<j<\mathcal{J}$. Obviously, one finds that \eqref{a1.5} is uniformly elliptic in $\rol$ for the expected solution $m_j>\mathcal{J}$.
The following  comparison principle is the key ingredient to prove the uniqueness of interior subsonic solution to \eqref{a1.4}.
\begin{lemma}\label{l2.1}
Let $p\in C^1[r_0,r_1]$ be a weak solution of \eqref{a1.5} satisfying $p\geq \mathcal{J}$ on $[r_0,r_1]$, and
\begin{equation*}
\intr \left[r\left(\dfrac{1}{p}-\dfrac{j^2}{p^3}\right)p_r+\dfrac{r \hj}{\T p}\right]\varphi_rdr+\intr (p-B(r))\varphi dr=0
\end{equation*}
for any $\varphi\in H^1_0(r_0,r_1)$ where $0<j<\hj$. Further, let $q\in C^1[0,1]$ be such that $q(x)> 0$ on $[r_0,r_1]$, $q(r_0)\leq \mathcal{J}$, $q(r_1)\leq \mathcal{J}$, and for any $\varphi\geq0$,  $\varphi\in H^1_0(r_0,r_1)$,
\begin{equation*}
\intr \left[r\left(\dfrac{1}{q}-\dfrac{j^2}{q^3}\right)q_r+\dfrac{r \hj}{\T q}\right]\varphi_rdr+\intr (q-B(r))\varphi dr\leq0.
\end{equation*}
Then $p(r)\geq q(r)$ over $[r_0,r_1]$.
\end{lemma}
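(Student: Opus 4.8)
The plan is to argue by contradiction through a flux (first integral) monotonicity argument rather than through a global $H^1$-energy estimate or a pointwise maximum principle. The reason is structural: the relaxation term $\tfrac{r\hj}{\T m}$ is a genuine first-order, non-variational contribution, and if one tests the difference of the two weak relations with $(q-p)^+$ (even after a Kirchhoff-type change of variable that makes the principal part a clean square), the relaxation term produces a cross term whose control would require an unavailable smallness condition on $\T$. So suppose the conclusion fails, i.e. the open set $S:=\{r\in\ro:\ q(r)>p(r)\}$ is nonempty. Because $q(r_0)\le\mathcal J=p(r_0)$ and $q(r_1)\le\mathcal J=p(r_1)$, at any endpoint of a connected component of $S$ continuity forces $q=p$; fixing one component $(\alpha,\beta)\subseteq\ro$ we thus have $q>p$ on $(\alpha,\beta)$ and $q(\alpha)=p(\alpha)$, $q(\beta)=p(\beta)$. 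I would then introduce the flux difference
\[
F(r):=\left[r\Big(\tfrac1q-\tfrac{j^2}{q^3}\Big)q_r+\tfrac{r\hj}{\T q}\right]-\left[r\Big(\tfrac1p-\tfrac{j^2}{p^3}\Big)p_r+\tfrac{r\hj}{\T p}\right],
\]
which is continuous on $\rol$ since $p,q\in C^1$.

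The engine of the proof is to show that $F$ is \emph{strictly} increasing on $(\alpha,\beta)$. Subtracting the weak identity satisfied by $p$ from the weak inequality satisfied by $q$ and testing with an arbitrary nonnegative $\varphi\in H^1_0\ro$ supported in $(\alpha,\beta)$ yields $\intr F\varphi_r\,dr+\intr(q-p)\varphi\,dr\le 0$, which is exactly the distributional statement $F_r\ge q-p$ on $(\alpha,\beta)$. Since $q-p$ is continuous and strictly positive there and $F$ is continuous, integrating gives $F(t)-F(s)\ge\int_s^t(q-p)\,dr>0$ for all $\alpha\le s<t\le\beta$, and in particular $F(\beta)-F(\alpha)>0$ by continuity up to the endpoints.

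It remains to analyze $F$ at the touching points, which is precisely where the obstruction from the relaxation term evaporates. At $r=\alpha$ we have $q(\alpha)=p(\alpha)=:m_\alpha\ge\mathcal J>j$, so the nonlinear coefficient $\tfrac1w-\tfrac{j^2}{w^3}$ and the relaxation term $\tfrac{r\hj}{\T w}$ take identical values for $q$ and $p$ and cancel, leaving $F(\alpha)=r\big(\tfrac{1}{m_\alpha}-\tfrac{j^2}{m_\alpha^3}\big)\big(q_r(\alpha)-p_r(\alpha)\big)$. The prefactor $\tfrac{m_\alpha^2-j^2}{m_\alpha^3}$ is strictly positive (this is the one place the uniform ellipticity coming from $m\ge\mathcal J>j$ is invoked), and because $q-p$ rises from $0$ as $r$ leaves $\alpha$ to the right we have $(q-p)_r(\alpha)\ge0$, hence $F(\alpha)\ge0$. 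The identical computation at $\beta$, now with $(q-p)_r(\beta)\le0$, gives $F(\beta)\le0$. This contradicts $F(\beta)>F(\alpha)\ge0$, so $S=\emptyset$ and $p\ge q$ on $\rol$.

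The main obstacle, and the crux of the argument, is the relaxation term: inside $(\alpha,\beta)$ it prevents any naive sign analysis of the nonlinear operator, but at the touching points $q=p$ makes both the relaxation and the nonlinear-coefficient contributions coincide and cancel, so that $F$ collapses to the elliptic term $r\,a(m)(q_r-p_r)$ with a controllable boundary sign. Recognizing that the comparison should be read off from the monotonicity of the single scalar quantity $F$ — rather than from a coercivity estimate that the relaxation term spoils — is what makes the proof close in full generality, with no smallness hypothesis on $B$, $\T$, or the data.
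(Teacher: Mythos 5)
Your proof is correct, and it takes a genuinely different route from the one the paper relies on. The paper omits the proof of Lemma \ref{l2.1}, deferring to Lemma 2.2 of \cite{Li17}, whose method is the one reproduced in this paper's Lemma \ref{l3.1}: test the difference of the two weak relations with the Trudinger-type function $\varphi=W^+/(W^++\epsilon)$, derive a uniform bound on $\intr\bigl|[\ln(1+W^+/\epsilon)]_r\bigr|^2dr$, and let $\epsilon\to0^+$ to force $W^+\equiv0$; there the first-order relaxation term is absorbed by Young's inequality against the coercive principal part, using $p\geq\mathcal{J}>j$ \emph{throughout} the bad set. Your argument instead isolates a connected component $(\alpha,\beta)$ of $\{q>p\}$, reads the subtracted weak relations as the distributional inequality $F_r\geq q-p>0$ for the continuous flux difference $F$, and contradicts the resulting strict increase of $F$ with the signs $F(\alpha)\geq0\geq F(\beta)$ obtained at the touching points, where the relaxation terms cancel exactly and only the positivity of $\frac{1}{m_\alpha}-\frac{j^2}{m_\alpha^3}$ at $m_\alpha=p(\alpha)\geq\mathcal{J}>j$ is needed. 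This is essentially the same first-integral mechanism the paper itself uses later to prove \emph{uniqueness} of the degenerate limit problem (the $G_w$ argument in the proof of Theorem \ref{t2.1}), transplanted to the comparison principle for the approximate problem. What your route buys: it is elementary, needs ellipticity only at the contact points, and requires no manipulation of the relaxation term in the interior. What it costs: it uses $C^1$ regularity of both $p$ and $q$ up to the touching points and is intrinsically one-dimensional, whereas the logarithmic test-function method is the one that survives weaker regularity and higher dimensions. All the individual steps check out (the endpoint case $\alpha=r_0$ or $\beta=r_1$ is handled correctly by the boundary hypotheses $q(r_0),q(r_1)\leq\mathcal{J}=p(r_0)=p(r_1)$, and the strictness $\int_\alpha^\beta(q-p)\,dr>0$ is what rules out the borderline case $F(\alpha)=F(\beta)=0$), so the proposal stands as a complete alternative proof.
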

\begin{proof}
This proof is same as that of Lemma 2.2 \cite{Li17} and we omit it here.
\end{proof}
Now let's prove the well-posedness of \eqref{a1.5} first.
\begin{lemma}\label{l2.3}
Assume that $B(r)\in L^{\infty}(r_0,r_1)$ and $\underline{B}\leq B(r)\leq \overline{B}$ satisfying $\overline{B}+\dfrac{1}{\T}> \mathcal{J}$ and $\underline{B}+\dfrac{\mathcal{J}}{\T(\overline{B}+1/\T)}>\mathcal{J}$, then there exists  a unique weak solution $m_j$ to \eqref{a1.5} satisfying $m_j-\hj\in H^1_0\ro$ and
\begin{equation*}
  m_j(r)\geq \mathcal{J}+\lambda\sin \left(\pi\cdot\frac{r-r_0}{r_1-r_0}\right),\quad r\in\rol,
\end{equation*}
where $\lambda$ is a small and positive constant, independent of $j$.
\end{lemma}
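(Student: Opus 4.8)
The equation \eqref{a1.5} is uniformly elliptic as soon as the unknown is confined to a fixed compact subinterval of $(j,\infty)$, so my plan is: (i) truncate the coefficients to make the problem globally non-degenerate; (ii) solve the truncated problem; (iii) establish a priori bounds confining the solution to the region where the truncation is inactive; (iv) sharpen the lower bound to the stated sinusoidal barrier; and (v) read off uniqueness from the comparison principle. Throughout set $c^{*}:=\overline{B}+\frac1\T$, which satisfies $c^{*}>\mathcal J$ by the first hypothesis, and write $a(s):=\frac1s-\frac{j^{2}}{s^{3}}$.

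First I would replace every occurrence of the unknown inside $a(\cdot)$ and inside $\frac{\mathcal J}{\T(\cdot)}$ by its truncation onto $[\mathcal J,c^{*}]$. The resulting equation is uniformly elliptic with bounded, Lipschitz coefficients, and a weak solution $m_j$ with $m_j-\mathcal J\in H^1_0\ro$ is produced by a Schauder fixed-point argument: freezing the coefficients at a given $w\in C\rol$ gives a linear uniformly elliptic equation solved by Lax--Milgram, and the compact embedding $H^1\ro\hookrightarrow C^{1/2}\rol$ turns the solution operator into a continuous, compact self-map of a large ball. Testing the (truncated) weak formulation with $(m_j-c^{*})^{+}$ and $(\mathcal J-m_j)^{+}$ then yields the a priori bounds $\mathcal J\le m_j\le c^{*}$: that $c^{*}$ is a supersolution is equivalent to $c^{*}-\frac{\mathcal J}{\T c^{*}}\ge\overline B$, i.e. to the first hypothesis, while that $\mathcal J$ is a subsolution is equivalent to $\underline B+\frac1\T\ge\mathcal J$, which follows from the second hypothesis since $\frac{\mathcal J}{\T(\overline B+1/\T)}\le\frac1\T$. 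Hence the truncation is never active and $m_j$ solves \eqref{a1.5}; since the equation is now genuinely non-degenerate with continuous coefficients and $B\in L^{\infty}$, the flux lies in $W^{1,\infty}\rol$ and one deduces $m_j\in C^1\rol$, the regularity required by Lemma \ref{l2.1}.

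The heart of the matter --- and the step I expect to be hardest --- is the \emph{$j$-independent} sinusoidal lower bound. I would show that $q(r):=\mathcal J+\lambda\sin\!\big(\pi\frac{r-r_0}{r_1-r_0}\big)$ is, for a suitable small $\lambda$, a subsolution of the nonlinear equation \eqref{a1.5}, and then invoke Lemma \ref{l2.1} with $p=m_j$ (which is $C^1$ and $\ge\mathcal J$) and this $q$ (smooth, positive, equal to $\mathcal J$ at both endpoints) to conclude $m_j\ge q$. To verify the subsolution inequality I expand the flux of $q$; its derivative-free part is $\frac{\mathcal J}{\T q}-q+B$, and here I bound $\frac{\mathcal J}{\T q}\ge\frac{\mathcal J}{\T(\overline B+1/\T)}$, which is legitimate because $q\le\mathcal J+\lambda\le\overline B+\frac1\T$ once $\lambda\le\overline B+\frac1\T-\mathcal J$ (a positive number, by the first hypothesis). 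This produces the strictly positive surplus $\eta:=\underline B+\frac{\mathcal J}{\T(\overline B+1/\T)}-\mathcal J>0$, which is precisely the content of the second hypothesis and explains its particular form. The remaining terms each carry a factor $q'=O(\lambda)$ or $q''=O(\lambda)$ multiplied by $a(q)$ or $a'(q)$, and since $a(q)\le\frac1{\mathcal J}$ and $|a'(q)|\le\frac4{\mathcal J^{2}}$ uniformly for all $j\in(0,\mathcal J)$, they are bounded by $C\lambda$ with $C$ independent of $j$. Choosing $\lambda$ small enough, controlled only by $\eta$, $C$ and $\overline B+\frac1\T-\mathcal J$, therefore makes $q$ a subsolution with amplitude independent of $j$. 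Securing this uniformity in $j$ --- so that the bound survives the degeneration of the ellipticity as $j\uparrow\mathcal J$ --- is the delicate point, and it is exactly what is needed for the vanishing-parameter limit $j\to\mathcal J$ carried out in Theorem \ref{t2.1}.

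Finally, uniqueness is immediate from Lemma \ref{l2.1}. Two solutions $m^{1},m^{2}$ are each exact $C^1$ solutions of \eqref{a1.5} lying above $\mathcal J$, so applying the comparison principle once with $(p,q)=(m^{1},m^{2})$ and once with $(p,q)=(m^{2},m^{1})$ yields $m^{1}\ge m^{2}$ and $m^{2}\ge m^{1}$, hence $m^{1}\equiv m^{2}$.
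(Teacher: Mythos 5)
Your proposal is correct and follows essentially the same route as the paper: a Schauder fixed-point argument made non-degenerate by confining the unknown to the box $[\mathcal J,\overline B+1/\T]$ (the paper restricts the convex set and invokes the weak maximum principle where you truncate coefficients and test with positive/negative parts, but these are interchangeable devices leading to the same role for the two hypotheses), followed by the sinusoidal subsolution $\mathcal J+\lambda\sin(\pi(r-r_0)/(r_1-r_0))$ with $j$-uniform constants and the comparison principle of Lemma \ref{l2.1} for both the lower bound and uniqueness. The computations you sketch (the surplus $\eta$ from the second hypothesis, the $O(\lambda)$ remainder terms bounded uniformly in $j$) match the paper's verification.
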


\begin{proof}

First denote a closed subset of $C^0\rol$ by
\bc
\mathcal{C}:=\left\{\omega\in C^0[r_0,r_1]|\hj\leq\omega(r)\leq N,\omega(r_0)=\omega(r_1)=\mathcal{J}\right\}
\ec
 for a undetermined constant $N>\hj$. Then we define a fixed-point operator $\mathfrak{P}:\mathcal{C}\longrightarrow C^0\rol$, $\mathfrak{P}(\bar{m})=m_j$, by solving the linearized system of \eqref{a1.5},
\begin{equation}\label{a1.13}
\begin{cases}
\left[r\left(\dfrac{1}{\bar{m}}-\dfrac{j^2}{\bar{m}^3}\right)(m_j)_r\right]_r-\dfrac{r \hj}{\T \bar{m}^2}(m_j)_r=m_j-B(r)-\dfrac{\hj}{\T \bar{m}},\quad r\in \ro,\\
m_j(r_0)=m_j(r_1)=\mathcal{J},
\end{cases}
\end{equation}
with $\bar{m}\in\mathcal{C}$. Due to the $L^2$ theory of elliptic equations, we have $m_j\in H^1(r_0,r_1)$ for system \eqref{a1.13}. By the compact imbedding $H^1\ro\hookrightarrow C^0\rol$, one can see that $\mathfrak{P}(\bar{m})$ is precompact. Further, $\mathfrak{P}$ is continuous by a standard continuity argument. In order to use the Schauder fixed point theorem \cite{Di01}, it remains to prove $\mathfrak{P}(\mathcal{C})\subset \mathcal{C}$.

Here we only need to show $\mathcal{J}\leq m_j(r)\leq N$ over $\rol$ by selecting a suitable $N$. In fact,
if $B(r)+ \dfrac{\hj}{\T \bar{m}}\geq \mathcal{J}$ over $\rol$, we obtain
\begin{equation*}
\begin{cases}
 \left[r\left(\dfrac{1}{\bar{m}}-\dfrac{j^2}{\bar{m}^3}\right)(m_j)_r\right]_r-\dfrac{r \hj}{\T \bar{m}^2}(m_j)_r-(m_j-\hj)\leq 0,\quad r\in \ro,\\
 m_j(r_0)=m_j(r_1)=\mathcal{J}.
\end{cases}
\end{equation*}
Thus, by the weak maximum principle (Theorem 8.1 \cite{Di01}), it is easy to see that $m_j-\hj\geq 0$. Similarly, suppose that $B(r)+ \dfrac{\hj}{\T \bar{m}}\leq N$ over $\rol$, then it follows that
\begin{equation*}
\begin{cases}
 \left[r\left(\dfrac{1}{\bar{m}}-\dfrac{j^2}{\bar{m}^3}\right)(m_j)_r\right]_r-\dfrac{r \hj}{\T \bar{m}^2}(m_j)_r-(m_j-N)\geq 0,\quad r\in \ro,\\
 m_j(r_0)=m_j(r_1)=\mathcal{J},
\end{cases}
\end{equation*}
which yields that $m_j-N\leq 0$.
In brief, we can derive that $\mathcal{J}\leq m_j(r)\leq N$ over $\rol$ while
\begin{equation}\label{a2.5}
 \mathcal{J}\leq B(r)+\dfrac{\hj}{\T \bar{m}} \leq N,\quad r\in\rol,
\end{equation}
for arbitrary $\mathcal{J}\leq\bar{m}\leq N$.  Now we choose $N= \overline{B}+\dfrac{1}{\T }> \mathcal{J}$ so that the right-side inequality of \eqref{a2.5} directly holds. Moreover, a simple computation using the condition
\bc
\underline{B}+\dfrac{\hj}{\T(\overline{B}+1/\T)} > \mathcal{J}
\ec
yields that the left-side inequality of \eqref{a2.5} also holds.
Therefore, $\mathfrak{P}(\mathcal{C})\subset \mathcal{C}$, and one can see that there exists a fixed point $m_j$ of $\mathfrak{P}$ such that $\mathfrak{P}(m_j)=m_j$.
Recalled Theorem 1 of \cite{De90}, \eqref{a1.5} has a weak solution $m_j\in H^2(r_0,r_1)$.
Thanks to the compact imbedding $H^2(r_0,r_1)\hookrightarrow C^1\rol$, we have $m_j\in C^1\rol$.

Then we need to prove the uniqueness of the solution of \eqref{a1.5} and build a lower bound estimate. Suppose that there exist two solutions $m_j^1$ and $m_j^2$ satisfying $m_j^1$, $m_j^2\geq \mathcal{J}$ and $m_j^1$, $m_j^2\in C^1\rol$. Thus, Lemma \ref{l2.1} implies that $m_j^1(r)=m_j^2(r)$ over $\rol$. Furthermore, define
\begin{equation*}
\mathfrak{m}(r):=\mathcal{J}+\lambda \sin \left(\pi\cdot\frac{r-r_0}{r_1-r_0}\right),\quad r\in\rol,
\end{equation*}
where $\lambda$ is a positive constant.  Note that $\underline{B}+\dfrac{\hj}{\T(\overline{B}+1/\T)}>\mathcal{J}$, then a direct calculation shows that
\begin{equation*}
\begin{split}
  -\left[r\left(\dfrac{1}{\mathfrak{m}}-\dfrac{j^2}{\mathfrak{m}^3}\right)\mathfrak{m}_r+\dfrac{r \hj}{\T \mathfrak{m}}\right]_r+\mathfrak{m}-B(r)& \leq C(\lambda^2+\lambda)+\left(\mathcal{J}-B(r)-\frac{\hj}{\T(\mathcal{J}+\lambda)}\right)\\
  &< C(\lambda^2+\lambda)+\left(\mathcal{J}-B(r)-\frac{\hj}{\T(\overline{B}+1/\T)}\right)\\
  &<0,
\end{split}
\end{equation*}
by choosing $\lambda$ sufficiently small to satisfy $\lambda<\overline{B}+\dfrac{1}{\T}-\mathcal{J}$ and
$C(\lambda^2+\lambda)<\underline{B}+\dfrac{\hj}{\T(\overline{B}+1/\T)}-\mathcal{J}$.
Here $C=C(\T,r_0)$ is a positive constant independent of $j$. Hence, by Lemma \ref{l2.1}, we get that
\begin{equation}\label{m2.7}
m_j(r)\geq \mathfrak{m}(r)\quad \text{over}\quad \rol,
\end{equation}
and the constant $\lambda$ is positive and small, independent of $j$. The proof is complete.
\end{proof}
Next we return to prove Theorem \ref{t2.1}.
\begin{proof}[Proof of Theorem \ref{t2.1}]
Multiplying \eqref{a1.5} by $(m_j-\mathcal{J})$, we get
\be\label{2.5}
\begin{split}
   & (\mathcal{J}^2-j^2)\intr r\dfrac{|(m_j)_r|^2}{(m_j)^3}dr+\frac{4}{9}\intr r\frac{m_j+\mathcal{J}}{(m_j)^3}|[(m_j-\mathcal{J})^{\frac{3}{2}}]_r|^2 dr\\
   &+\frac{\hj}{\T}\intr \frac{r(m_j)_r}{m_j}dr+\intr (m_j-B)(m_j-\mathcal{J})dr=0.
\end{split}
\ee
 Combining $\underline{B}+\dfrac{1}{\T}\geq \mathcal{J}$ with integration by parts and Cauchy inequality, we obtain
\begin{equation*}
  \frac{\hj}{\T}\intr \frac{r(m_j)_r}{m_j}dr = \frac{\hj}{\T}\intr rd(\ln m_j)=\frac{\hj}{\T}[(r_1-r_0)\cdot\ln \mathcal{J}]-\frac{\hj}{\T}\intr \ln m_jdr,
\end{equation*}
and
\begin{equation*}
\begin{split}
 \intr (m_j-B)(m_j-\mathcal{J})dr
&  \geq \intr (m_j-\mathcal{J})^2dr-\intr \left(B+\frac{1}{\T}-\mathcal{J}\right)(m_j-\mathcal{J})dr\\
&\quad+\frac{1}{\T}\intr (m_j-\mathcal{J})dr\\
 &\geq \frac{1}{2}\intr(m_j-\mathcal{J})^2dr-\frac{1}{2}\intr \left(B+\frac{1}{\T}-\mathcal{J}\right)^2dr\\
 &\quad+\frac{1}{\T}\intr (m_j-\mathcal{J})dr.
 \end{split}
\end{equation*}
After that, because of $\mathcal{J}\leq m_j\leq \overline{B}+\dfrac{1}{\T}$, we derive from \eqref{2.5} that
\begin{equation*}
\begin{split}
&\frac{(\mathcal{J}^2-j^2)r_0}{(\overline{B}+\frac{1}{\T})^3}\intr  |(m_j)_r|^2dr+\frac{8 r_0\mathcal{J}}{9(\overline{B}+\frac{1}{\T})^3}\intr |[(m_j
-\mathcal{J})^{\frac{3}{2}}]_r|^2dr+\frac{1}{2}\intr(m_j-\mathcal{J})^2dr\\
&\quad\leq\frac{1}{2}\intr \left(B+\frac{1}{\T}-\mathcal{J}\right)^2dr+\frac{\hj}{\T}\intr \ln m_jdr-\frac{\hj}{\T}[(r_1-r_0)\cdot\ln \mathcal{J}]\\
&\quad\leq\frac{1}{2}\intr \left(\overline{B}+\frac{1}{\T}-\mathcal{J}\right)^2dr+\frac{\mathcal{J}(r_1-r_0)}{\T}\left[\ln \left(\overline{B}+\frac{1}{\T}\right)-\ln \mathcal{J}\right],
\end{split}
\end{equation*}
which gives
\begin{equation*}
  \|(m_j-\mathcal{J})^{\frac{3}{2}}\|_{H^1\ro}\leq C_1(\overline{B},\T,r_0),\quad \|(\mathcal{J}^2-j^2)(m_j)_r\|_{L^2\ro}\leq C_2(\overline{B},\T,r_0)(\mathcal{J}^2-j^2)^{\frac{1}{2}}.
\end{equation*}
Here $C_1$ and  $C_2$ are positive constants independent of $j$.
Thus, by the compact imbedding $H^1\ro\hookrightarrow C^{\alpha}\rol$, $0<\alpha<\frac{1}{2}$, there exists a function $m$, as $j\rightarrow \mathcal{J}^-$,  such that up to a subsequence,
\begin{equation}\label{b2.10}
(m_j-\mathcal{J})^{\frac{3}{2}}\rightharpoonup
(m-\mathcal{J})^{\frac{3}{2}}\quad\text{weakly ~in} \quad H^1(r_0,r_1),
\end{equation}
\begin{equation}\label{b2.11}
 (m_j-\mathcal{J})^{\frac{3}{2}}\rightarrow
 (m-\mathcal{J})^{\frac{3}{2}}\quad\text{strongly ~in} \quad C^\alpha[r_0,r_1],
\end{equation}
\begin{equation}\label{b2.12}
  (\mathcal{J}^2-j^2)(m_j)_r\rightarrow 0 \quad\text{strongly ~in} \quad L^2(r_0,r_1).
\end{equation}
Noticing that $[(m_j-\mathcal{J})^2]_r=\frac{4}{3}(m_j-\mathcal{J})^{\frac{1}{2}}[(m_j-\mathcal{J})^{\frac{3}{2}}]_r$, we get
\begin{equation*}
  ||(m_j-\mathcal{J})^2||_{H^1\ro}\leq C||(m_j-\mathcal{J})^{\frac{3}{2}}||_{H^1\ro}\leq C(r_0,\overline{B},\T),
\end{equation*}
which leads to
\begin{equation*}
  (m_j-\mathcal{J})^2\rightharpoonup (m-\mathcal{J})^2\quad\text{weakly ~in} \quad H^1(r_0,r_1) \quad\text{as}\quad j\rightarrow \mathcal{J}^-.
\end{equation*}
Thus, multiplying \eqref{a1.5} by $\varphi\in H_0^1(r_0,r_1)$, we have
\begin{equation*}
\begin{split}
&\intr r\frac{m_j+\mathcal{J}}{2m_j^3}[(m_j-\hj)^2]_r\varphi_rdr+\intr \frac{r}{m_j^3}(\hj^2-j^2)(m_j)_r\varphi_rdr\\
&\quad+\dfrac{\hj}{\T}\intr \frac{r}{ m_j}\varphi_rdr+\intr (m-B(r))\varphi dr=0.
\end{split}
\end{equation*}
As $j\rightarrow \mathcal{J}^-$, by \eqref{b2.10}-\eqref{b2.12}, \eqref{1.14} holds in the case of $n=2$.
The lower bound estimate is directly obtained from \eqref{m2.7} and \eqref{b2.11}.

To prove the uniqueness of the interior subsonic solution, we first define $w(r):=(m(r)-\mathcal{J})^2$ and it is easy to see that $w\in H_0^1(r_0,r_1)$ satisfies the equality
\be\label{2.6}
\left(\dfrac{r(\sqrt{w}+2\mathcal{J})w_r}{2(\sqrt{w}+\mathcal{J})^3}+\dfrac{r \mathcal{J}}{\T (\sqrt{w}+\hj)}\right)_r=\sqrt{w}+\mathcal{J}-B(r),\quad r\in\ro.
\ee
Then, recalled from the proof of Theorem 2.1 \cite{Li17}, it implies by \eqref{2.6} that $w\in C^{1+\frac{1}{4}}[r_0,r_1]$.
Letting
\begin{equation*}
G_w(r):=\dfrac{r(\sqrt{w}+2\mathcal{J})w_r}{2(\sqrt{w}+\mathcal{J})^3}+\dfrac{r\mathcal{J}}{\T (\sqrt{w}+\mathcal{J})},
\end{equation*}
we have
\be\label{2.7}
\begin{cases}
\dfrac{r(\sqrt{w}+2\mathcal{J})w_r}{2(\sqrt{w}+\mathcal{J})^3}=G_w-\dfrac{r \mathcal{J}}{\T (\sqrt{w}+\mathcal{J})},\\
G_w(r)=G_w(r_0)+\int_{r_0}^r (\sqrt{w(s)}+\mathcal{J}-B(r))ds.
\end{cases}
\ee
First, suppose that \eqref{2.1} has two different interior subsonic solutions $m_1(r)$ and $m_2(r)$ over $\rol$. Next there exists a nonempty domain $[\bar{r}_0,\bar{r}_1]\subset [r_0.r_1]$ such that \eqref{2.6} has two corresponding solutions $w_1(r)$ and $w_2(r)$ satisfying
\begin{equation*}
w_1(\bar{r}_0)=w_2(\bar{r}_0),\quad w_1(\bar{r}_1)=w_2(\bar{r}_1)\quad \text{and} \quad w_1(r)> w_2(r)\quad \text{for}\quad r\in(\bar{r}_0,\bar{r}_1).
\end{equation*}
Because of the $C^1$-continuity of $w_1$ and $w_2$, it holds that
\begin{equation}\label{a2.11}
(w_1)_r(\bar{r}_0)\geq (w_2)_r(\bar{r}_0)\quad \text{and} \quad (w_1)_r(\bar{r}_1)\leq (w_2)_r(\bar{r}_1).
\end{equation}
Hence, it follows from the first equation of $\eqref{2.7}$ that $G_{w_1}(\bar{r}_1)\leq G_{w_2}(\bar{r}_1)$. Then by the second equation of \eqref{2.7}, we derive
\begin{equation*}
  G_{w_1}(\bar{r}_0)+\int_{\bar{r}_0}^{\bar{r}_1} (\sqrt{w_1(s)}+\mathcal{J}-B(r))ds\leq G_{w_2}(\bar{r}_0)+\int_{\bar{r}_0}^{\bar{r}_1} (\sqrt{w_2(s)}+\mathcal{J}-B(r))ds.
\end{equation*}
Since $w_1(r)> w_2(r)$ for $r\in(\bar{r}_0,\bar{r}_1)$, we get
\begin{equation*}
  G_{w_1}(\bar{r}_0)< G_{w_2}(\bar{r}_0),
\end{equation*}
which gives
$(w_1)_r(\bar{r}_0)< (w_2)_r(\bar{r}_0)$. This is a contradiction to \eqref{a2.11}. Therefore, the interior subsonic solution of \eqref{2.1} is unique.

In the end, we show that $m\in C^{\frac{1}{2}}\rol$. Since $m(r)\geq \mathcal{J}$ over $\rol$, then
\begin{equation*}
|m(a)-\mathcal{J}+m(c)-\mathcal{J}|=|m(a)-\mathcal{J}|+|m(c)-\mathcal{J}|\geq |(m(a)-\mathcal{J})-(m(c)-\mathcal{J})|=|m(a)-m(c)|.
\end{equation*}
Thus, by $(m-\mathcal{J})^2\in C^1\rol$, it is easy to see that
\begin{equation*}
\begin{split}
  \frac{|m(a)-m(c)|^2}{|a-c|}&=\frac{|m(a)-m(c)||(m(a)-\mathcal{J})^2-(m(c)
  -\mathcal{J})^2|}{|a-c||m(a)-\mathcal{J}+m(c)-\mathcal{J}|}\\
  &\leq
  \frac{|(m(a)-\mathcal{J})^2-(m(c)-\mathcal{J})^2|}{|a-c|}\\
  &\leq C,
  \end{split}
\end{equation*}
for any $a$, $c\in \rol$, which implies $m\in C^{\frac{1}{2}}\rol$.
This finishes the proof.
\end{proof}

{\it 2.2. the case of n=3.}
In the subsection, we prove the existence and uniqueness of interior subsonic solutions of \eqref{a1.4} in the three-dimensional case. Here \eqref{a1.4} is rewritten as
\be\label{2.9}
\begin{cases}
\left[r^{2}\left(\dfrac{1}{m}-\dfrac{\mathcal{J}^2}{m^3}\right)m_r+
\dfrac{r^{2}\mathcal{J}}{\T m}\right]_r=m-B(r)+2,\quad r\in\ro,\\
m(r_0)=m(r_1)=\mathcal{J}.
\end{cases}
\ee
Now we list some results for interior subsonic solution of \eqref{2.9}.
\begin{theorem}
Suppose that $\overline{\mathcal{B}}> \mathcal{J}$ and $\inf\limits_{r\in\rol}\left\{B(r)+\dfrac{2 r\mathcal{J}}{\T\overline{\mathcal{B}}}-2\right\}> \mathcal{J}$, then \eqref{2.9} admits a unique  interior subsonic solution $m(r)$ over $\rol$ satisfying $m\in C^{\frac{1}{2}}\rol$ and
\begin{equation*}
  m(r)\geq \mathcal{J}+\bar{\lambda}\sin \left(\pi\cdot\frac{r-r_0}{r_1-r_0}\right),\quad r\in\rol,
\end{equation*}
where $\bar{\lambda}$ is a small and positive constant.
\end{theorem}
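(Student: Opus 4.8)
The plan is to follow the proof of Theorem \ref{t2.1} almost verbatim, the only genuinely new point being the a priori $L^{\infty}$ bound, which is what forces the hypotheses phrased through $\overline{\mathcal{B}}$. First I would regularize \eqref{2.9} by replacing $\mathcal{J}^2$ in the leading coefficient by $j^2$, $0<j<\mathcal{J}$:
\be
\begin{cases}
\left[r^{2}\left(\dfrac{1}{m_j}-\dfrac{j^2}{(m_j)^3}\right)(m_j)_r+\dfrac{r^{2}\hj}{\T m_j}\right]_r=m_j-B(r)+2,\quad r\in\ro,\\
m_j(r_0)=m_j(r_1)=\mathcal{J},
\end{cases}
\ee
which is uniformly elliptic on $\rol$ so long as the sought solution obeys $m_j\geq\mathcal{J}>j$. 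On the closed set $\mathcal{C}=\{\omega\in C^0\rol\mid \mathcal{J}\leq\omega\leq N,\ \omega(r_0)=\omega(r_1)=\mathcal{J}\}$ I would define $\mathfrak{P}(\bar m)=m_j$ by solving the associated linearized problem, obtain a fixed point from the Schauder theorem, and upgrade it to $m_j\in C^1\rol$ via the $H^1\hookrightarrow C^0$ and $H^2\hookrightarrow C^1$ imbeddings, exactly as in Lemma \ref{l2.3}.

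The crux is the choice of $N$ ensuring $\mathfrak{P}(\mathcal{C})\subset\mathcal{C}$, and this is where the three-dimensional geometry genuinely enters. Differentiating $\tfrac{r^{2}\hj}{\T m}$ produces the extra zeroth-order contribution $\tfrac{2r\hj}{\T m}$, so after moving the first-order part to the left the linearized source reads $m_j-\bigl(B(r)-2+\tfrac{2r\mathcal{J}}{\T\bar m}\bigr)$, and the weak maximum principle (Theorem 8.1 \cite{Di01}) yields $\mathcal{J}\leq m_j\leq N$ precisely when
\bc
\mathcal{J}\leq B(r)-2+\dfrac{2r\mathcal{J}}{\T\bar m}\leq N,\quad r\in\rol,\ \mathcal{J}\leq\bar m\leq N.
\ec
Since $\bar m\geq\mathcal{J}$ gives $\tfrac{2r\mathcal{J}}{\T\bar m}\leq\tfrac{2r}{\T}$, the right inequality holds for $N=\overline{\mathcal{B}}$, which is admissible because $\overline{\mathcal{B}}>\mathcal{J}$; using $\bar m\leq N=\overline{\mathcal{B}}$ the left inequality reduces to $B(r)-2+\tfrac{2r\mathcal{J}}{\T\overline{\mathcal{B}}}\geq\mathcal{J}$, which is exactly the hypothesis $\min_{r}\bigl(B+\tfrac{2r\mathcal{J}}{\T\overline{\mathcal{B}}}-2\bigr)>\mathcal{J}$. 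I expect this to be the only delicate estimate, and it is what replaces the condition on $\overline{B}$ from the planar case by the shifted doping $\mathcal{B}=B+\tfrac{2r}{\T}-2$. Uniqueness of $m_j$ and the barrier lower bound $m_j\geq\mathcal{J}+\bar\lambda\sin(\pi\tfrac{r-r_0}{r_1-r_0})$ then follow from the three-dimensional analogue of the comparison Lemma \ref{l2.1} and a direct computation, with $\bar\lambda>0$ small and independent of $j$.

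For the vanishing-regularization limit I would test the regularized equation with $m_j-\mathcal{J}$. The principal part splits into $(\mathcal{J}^2-j^2)\intr r^2\tfrac{|(m_j)_r|^2}{m_j^3}\,dr$ and $\tfrac{4}{9}\intr r^2\tfrac{m_j+\mathcal{J}}{m_j^3}\bigl|[(m_j-\mathcal{J})^{3/2}]_r\bigr|^2\,dr$; the relaxation term, after writing $\tfrac{\hj}{\T}\intr r^2\,d(\ln m_j)$ and integrating by parts, leaves a boundary contribution plus $-\tfrac{2\hj}{\T}\intr r\ln m_j\,dr$, both controlled by the uniform bound $\mathcal{J}\leq m_j\leq\overline{\mathcal{B}}$; and $\intr(m_j-B+2)(m_j-\mathcal{J})\,dr$ is bounded below by $\tfrac{1}{2}\intr(m_j-\mathcal{J})^2\,dr$ minus an $L^\infty$-controlled constant via Young's inequality. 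This gives, uniformly in $j$, $\|(m_j-\mathcal{J})^{3/2}\|_{H^1\ro}\leq C$ and $\|(\mathcal{J}^2-j^2)(m_j)_r\|_{L^2\ro}\leq C(\mathcal{J}^2-j^2)^{1/2}$. Letting $j\to\mathcal{J}^-$ along the convergences \eqref{b2.10}--\eqref{b2.12} recovers the weak formulation \eqref{1.14} with $n=3$ (here the $+2$ is $r^{n-3}(n-1)(n-2)$), and the lower bound passes to the limit through \eqref{m2.7}.

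Finally, uniqueness of the interior subsonic solution and the $C^{1/2}$ regularity are obtained exactly as in Theorem \ref{t2.1}: setting $w=(m-\mathcal{J})^2\in H^1_0\ro$, rewriting \eqref{2.9} as $(G_w)_r=\sqrt w+\mathcal{J}-B+2$ with $G_w=\tfrac{r^2(\sqrt w+2\mathcal{J})w_r}{2(\sqrt w+\mathcal{J})^3}+\tfrac{r^2\mathcal{J}}{\T(\sqrt w+\mathcal{J})}$, bootstrapping to $w\in C^{1+1/4}\rol$, and running the same contradiction argument on a subinterval where two solutions would cross (the monotone flux relation forces $G_{w_1}\leq G_{w_2}$ at the right endpoint but $G_{w_1}<G_{w_2}$ at the left, contradicting the $C^1$-matching of derivatives). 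The Hölder bound then follows from $(m-\mathcal{J})^2\in C^1\rol$ together with $m\geq\mathcal{J}$, exactly as in the planar case.
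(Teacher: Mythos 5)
Your proposal is correct and follows essentially the same route as the paper: the same $j$-regularization with a Schauder fixed point for the linearized problem, the same choice $N=\overline{\mathcal{B}}$ justified by exactly the two maximum-principle inequalities you isolate, the same sine barrier and comparison principle for the lower bound, the same energy estimate on $(m_j-\mathcal{J})^{3/2}$ to pass to the limit, and the same flux-monotonicity argument for uniqueness and the elementary $C^{1/2}$ bound. No gaps.
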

\begin{proof}
First we divide the process into three steps.

{\it Step 1.} In this step, we concern  the following approximate equation of \eqref{2.9}
\be\label{a2.10}
\begin{cases}
\left[r^{2}\left(\dfrac{1}{m_j}-\dfrac{j^2}{(m_j)^3}\right)(m_j)_r+\dfrac{r^{2}\hj}{\T m_j}\right]_r=m_j-B(r)+2,\quad r\in\ro,\\
m(r_0)=m(r_1)=\mathcal{J},
\end{cases}
\ee
and prove the existence and uniqueness of the solution to \eqref{a2.10}.
In order to apply the Schauder fixed point theorem, we define an operator $\mathcal{P}:\bar{m}\rightarrow m_j$, by solving the linear equation
\bc
\begin{cases}
\left[r^2\left(\dfrac{1}{\bar{m}}-\dfrac{j^2}{\bar{m}^3}\right)(m_j)_r\right]_r-\dfrac{r^2 \hj}{\T \bar{m}^2}(m_j)_r=m_j-B(r)+2-\dfrac{2r \hj}{\T \bar{m}},\quad r\in\ro,\\
m_j(r_0)=m_j(r_1)=\mathcal{J}.
\end{cases}
\ec
Now it is easy to verify  that the fixed-point operator $\mathcal{P}$ is precompact and continuous. What's important is  to prove $\mathcal{P}(\mathcal{C})\subset\mathcal{C}$. As similar as that of Lemma \ref{l2.3}, and by applying the weak maximum principle, we get the result
\begin{equation*}
  \mathcal{J}\leq m_j(r)\leq \overline{\mathcal{B}}
\end{equation*}
provided that
\begin{equation*}
  \overline{\mathcal{B}}> \mathcal{J} \quad\text{and}\quad \inf\limits_{r\in\rol}\left\{B(r)+\dfrac{2r\hj}{\T\overline{\mathcal{{B}}}}-2\right\}> \mathcal{J}.
\end{equation*}
Hereafter
there exists  a fixed point $m_j$ of $\mathcal{P}$ such that $\mathcal{P}(m_j)=m_j$, which is also
a weak solution to \eqref{a2.10} satisfying $m_j\in H^2\ro$.

The uniqueness of the solution of \eqref{a2.10} can be obtained by a comparison principle, just like Lemma \ref{l2.1}.
Of course, we calculate that the comparison principle must be derived in the three-dimensional case. Hence, define
\begin{equation*}
\mathfrak{\bar{m}}(r):=\mathcal{J}+\bar{\lambda }\sin \left(\pi\cdot\frac{r-r_0}{r_1-r_0}\right),\quad r\in\rol,
\end{equation*}
and note that $\inf\limits_{r\in\rol}\left\{B(r)+\dfrac{2r\hj}{\T\overline{\mathcal{{B}}}}-2\right\}> \mathcal{J}$.
Then if $\bar{\lambda}>0$ is sufficiently small, we also obtain
\begin{equation*}\begin{split}
-\left[r^2\left(\dfrac{1}{\mathfrak{\bar{m}}}-\dfrac{j^2}{\mathfrak{\bar{m}}^3}\right)\mathfrak{\bar{m}}_r+\dfrac{r^2 \hj}{\T \mathfrak{\bar{m}}}\right]_r+\mathfrak{\bar{m}}-B(r)+2&\leq C(\bar{\lambda}^2+\bar{\lambda})+\left(\mathcal{J}-B(r)-\frac{2r\hj}{\T(\mathcal{J}+\bar{\lambda})}+2\right)\\
&< C(\bar{\lambda}^2+\bar{\lambda})+\left(\mathcal{J}-B(r)-\frac{2r\hj}{\T\overline{\mathcal{B}}}+2\right)\\
&<0.
\end{split}
\end{equation*}
Here $C$ is a positive constant independent of $j$.
By the comparison principle, we also get
\begin{equation*}
m_j(r)\geq \mathfrak{\bar{m}}(r)\quad \text{over}\quad \rol.
\end{equation*}

{\it Step 2.} The second step is to give a uniform bound estimate of the approximate solution $m_j(r)$ for all $0<j<\hj$. As in \eqref{2.5}, we have
\be\label{c2.7}
\begin{split}
   & (\mathcal{J}^2-j^2)\intr r^2\dfrac{|(m_j)_r|^2}{(m_j)^3}dr+\frac{4}{9}\intr r^2\frac{m_j+\mathcal{J}}{(m_j)^3}\cdot|[(m_j-\mathcal{J})^{\frac{3}{2}}]_r|^2dr \\
   &+\frac{\hj}{\T}\intr \frac{r^2(m_j)_r}{m_j}dr+\intr (m_j-B+2)(m_j-\mathcal{J})dr=0.
\end{split}
\ee
Then because of  $\underline{\mathcal{B}}>\hj$, it holds that
\begin{equation*}
\begin{split}
  \frac{\hj}{\T}\intr \frac{r^2(m_j)_r}{m_j}dr &=\frac{\hj}{\T}[(r_1^2-r_0^2)\cdot\ln \mathcal{J}]-\frac{\hj}{\T}\intr 2r\ln m_jdr,\\
  \end{split}
\end{equation*}
and
\begin{equation*}
\begin{split}
 \intr (m_j-B+2)(m_j-\mathcal{J})dr&\geq \frac{1}{2}\intr(m_j-\mathcal{J})^2dr-\frac{1}{2}\intr (B+\frac{2r}{\T}-2-\mathcal{J})^2dr\\&\quad+\frac{1}{\T}\intr 2r(m_j-\mathcal{J})dr,
\end{split}
\end{equation*}
where we used Young's inequality and integration by parts.
Therefore, it follows from \eqref{c2.7} and $\mathcal{J} \leq m_j \leq \overline{\mathcal{B}}$   that
\begin{equation*}
\begin{split}
&\frac{(\mathcal{J}^2-j^2)r_0^2 }{\overline{\mathcal{B}}^3}\intr |(m_j)_r|^2dr+\frac{8 r_0^2\mathcal{J}}{9\overline{\mathcal{B}}^3}\intr |[(m_j-\mathcal{J})^{\frac{3}{2}}]_r|^2dr+\frac{1}{2}\intr(m_j-J_0)^2dr\\
&\leq\frac{1}{2}\intr (\overline{\mathcal{B}}-\mathcal{J})^2dr+\frac{\mathcal{J}(r_1^2-r_0^2)(\ln \overline{\mathcal{B}}-\ln \mathcal{J})}{\T},
\end{split}
\end{equation*}
which also gives
\begin{equation*}
  ||(m_j-\mathcal{J})^{\frac{3}{2}}||_{H^1}\leq C\quad\text{and}\quad ||(\mathcal{J}^2-j^2)(m_j)_r||_{L^2}\leq C(\mathcal{J}^2-j^2)^{\frac{1}{2}},
\end{equation*}
for  some constant $C$ depending on $(\T,\overline{\mathcal{B}} ,r_0,r_1)$, but independent of $j$.
Hence, by the above estimates, there exists a subsequence $\{m_j\}_{0<j<\mathcal{J}}$, converging weakly to a limit $m$ as $j\rightarrow \mathcal{J}^-$. In fact, the limit function $m$ is certainly a weak solution of \eqref{2.9} such that $(m-\mathcal{J})^2\in H^1_0\ro$ and \eqref{1.14} holds.

{\it Step 3.} The last step is to prove the uniqueness of this interior subsonic solution $m(r)$ and to show $ m\in C^\frac{1}{2}\rol$. This part of the proof is referring to that of Theorem \ref{t2.1} directly,  and we don't repeat it here.
The proof is finished.
\end{proof}

\section{Existence of interior supersonic solutions}

In this section, we are going to  prove the existence of interior supersonic solutions of \eqref{a1.4} in the two and three dimensional cases, respectively.

{\it 3.1. the case of $n=2$.}\\
As similar as Lemma \ref{l2.1}, we introduce a comparison principle first.
\begin{lemma}\label{l3.1}
Let $V\in C^1\rol$ satisfying $V(r)\geq k_0>1$ over $\rol$ be a weak solution of the following equation
\begin{equation*}
\begin{cases}
\left[rd_1(r)\cdot(V-1) V_r+\dfrac{rV}{\T}\right]_r-\left(\dfrac{V}{\T}-d_2(r)\right)=0, \\
V(r_0)=V(r_1)=k_0,
\end{cases}r\in(r_0,r_1),
\end{equation*}
where $d_1,d_2\in L^{\infty}\ro$ and $d_1(r)>0$ on $\rol$. Thus, for any $\varphi\in H_0^1\ro$, it holds that
\begin{equation*}
  \intr \left[rd_1(r)\cdot(V-1) V_r+\dfrac{rV}{\T}\right]\varphi_rdr+\intr \left(\dfrac{V}{\T}-d_2(r)\right)\varphi dr=0.
\end{equation*}
In addition, let $U\in C^1\rol$ be such that $U(r)>0$ over $\rol$, $U(r_0)\leq k_0$, $U(r_1)\leq k_0$, and for any $\varphi\geq 0$, $\varphi\in H_0^1\ro$,
\begin{equation*}
  \intr \left[rd_1(r)\cdot(U-1) U_r+\dfrac{rU}{\T}\right]\varphi_rdr+\intr \left(\dfrac{U}{\T}-d_2(r)\right)\varphi dr\leq0.
\end{equation*}
Then $V(r)\geq U(r)$ over $\rol$.
\end{lemma}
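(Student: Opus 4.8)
The plan is to prove this comparison principle by \emph{contradiction}, following the same strategy as the uniqueness argument in Theorem \ref{t2.1} (equivalently Lemma \ref{l2.1}): rather than a direct energy estimate, I would exploit the one-dimensional flux structure of the equation. Introduce the fluxes
\[
F_V(r):=rd_1(r)(V-1)V_r+\frac{rV}{\T},\qquad F_U(r):=rd_1(r)(U-1)U_r+\frac{rU}{\T}.
\]
The weak formulation for $V$ says precisely that $F_V$ has weak derivative $(F_V)_r=\frac{V}{\T}-d_2\in L^{\infty}\ro$, so $F_V$ is Lipschitz; the one-sided (subsolution) inequality for $U$ says that $(F_U)_r\ge \frac{U}{\T}-d_2$ in the distributional sense, so $F_U+Cr$ is nondecreasing for a suitable constant $C$, whence $F_U$ is of bounded variation with only upward jumps. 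Setting $G:=F_U-F_V$, subtraction gives $G_r\ge\frac{U-V}{\T}$ (as measures).

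Now suppose, to the contrary, that the open set $\Omega^+:=\{r:U(r)>V(r)\}$ is nonempty. Since $U(r_0)\le k_0=V(r_0)$ and $U(r_1)\le k_0=V(r_1)$, we have $\Omega^+\subset\ro$, so $\Omega^+$ possesses a connected component $(\alpha,\beta)$ with contact values $U(\alpha)=V(\alpha)=:a$ and $U(\beta)=V(\beta)=:b$ and $U>V$ inside. On this interval $U>V\ge k_0>1$, so both $U-1$ and $V-1$ stay bounded below by $k_0-1>0$; this is the decisive use of the hypothesis $V\ge k_0>1$. From $G_r\ge\frac{U-V}{\T}>0$ on $(\alpha,\beta)$ the flux difference $G$ is strictly increasing there, giving $G(\alpha^+)<G(\beta^-)$. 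On the other hand, at each contact point the identity $U=V$ kills the zeroth-order part $\frac{r(U-V)}{\T}$ of $G$, leaving (through one-sided limits)
\[
G(\alpha^+)=\alpha\,d_1(\alpha)(a-1)\bigl(U_r(\alpha)-V_r(\alpha)\bigr),\qquad G(\beta^-)=\beta\,d_1(\beta)(b-1)\bigl(U_r(\beta)-V_r(\beta)\bigr).
\]
Because $U,V\in C^1\rol$ and $U-V\ge0$ vanishes only at $\alpha,\beta$, the $C^1$-contact forces $U_r(\alpha)\ge V_r(\alpha)$ and $U_r(\beta)\le V_r(\beta)$. Combined with $d_1$ bounded below by a positive constant and $a-1,b-1>0$, this yields $G(\alpha^+)\ge 0\ge G(\beta^-)$, contradicting $G(\alpha^+)<G(\beta^-)$. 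Hence $\Omega^+=\emptyset$, i.e. $V\ge U$ over $\rol$.

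The hard part will be making the two endpoint flux evaluations rigorous, since $d_1$ is merely $L^{\infty}$ and $F_U$ need not be continuous. I would handle this by passing to one-sided limits: $F_V$ is Lipschitz, $F_U$ has only upward jumps, and near the contact points the term $r d_1(r)\bigl[(U-1)U_r-(V-1)V_r\bigr]$ is controlled through the $C^1$-regularity of $U,V$ (so the bracket tends to $(a-1)(U_r(\alpha)-V_r(\alpha))$, resp. $(b-1)(U_r(\beta)-V_r(\beta))$) together with the a.e. pointwise bounds on $d_1$, which fix the correct sign of each limit. This is the exact analogue of the regularity bookkeeping ($w\in C^{1+\frac14}\rol$) used in the uniqueness proof of Theorem \ref{t2.1}. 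I would stress why a direct energy comparison—testing the subtracted weak forms with $(U-V)^{+}$—does \emph{not} close here: the quasilinear term generates a cross term proportional to $\int r d_1\,(U-V)\,(U_r^2-V_r^2)$ of indefinite sign, which the flux argument sidesteps entirely. Ultimately, the positivity of the degenerate coefficient $r d_1(V-1)$ on the contact set, guaranteed precisely by $V\ge k_0>1$, is what powers both the strict monotonicity of $G$ and the definite signs of its boundary values.
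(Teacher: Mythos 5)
Your proof is correct, but it takes a genuinely different route from the paper's. The paper proves Lemma \ref{l3.1} with the quasilinear comparison machinery of Gilbarg--Trudinger (Theorem 10.7): writing $I(r,z_1,z_2)=rd_1(r)(z_1-1)z_2+rz_1/\T$ and $W=U-V$, it expands $I(r,U,U_r)-I(r,V,V_r)=\int_0^1\partial_{z_1}I\,dt\cdot W+\int_0^1\partial_{z_2}I\,dt\cdot W_r$, tests the subtracted weak formulations with the bounded function $\varphi=W^+/(W^++\epsilon)$, and uses $\int_0^1\partial_{z_2}I\,dt=rd_1(V-1)\geq r_0(k_0-1)d_m$ together with Young's and Poincar\'e's inequalities to obtain an $\epsilon$-independent bound on $\intr\bigl[\ln\bigl(1+W^+/\epsilon\bigr)\bigr]^2dr$, which forces $W^+\equiv0$ as $\epsilon\to0^+$. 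Your flux-monotonicity argument on a connected component of $\{U>V\}$ is instead the strategy the paper reserves for the \emph{uniqueness} of the interior subsonic solution (the $G_w$ argument in the proof of Theorem \ref{t2.1}), transplanted to the sub/supersolution comparison. The hypothesis $V\geq k_0>1$ plays the same role in both proofs --- uniform positivity of the principal coefficient $rd_1(V-1)$ --- but it enters as coercivity of the gradient term in the paper versus the definite sign of the one-sided flux limits at the contact points in yours. Your route is more elementary and ODE-flavoured, and your remark about why the naive $(U-V)^+$ test fails is apt (the paper's choice of $\varphi$ is precisely the classical repair for that indefinite cross term); the cost is the bookkeeping you flag: $F_U$ is only an a.e.-defined BV function, so $G(\alpha^+)$ and $G(\beta^-)$ must be read as one-sided limits of the BV representative taken along a full-measure set. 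Your sketch does close this --- the bracket $(U-1)U_r-(V-1)V_r$ converges to $(a-1)(U_r(\alpha)-V_r(\alpha))$ by $C^1$-regularity while $rd_1$ stays between positive bounds a.e., which pins down the sign of the limit --- and the strict inequality $G(\alpha^+)<G(\beta^-)$ is obtained by integrating $G_r\geq(U-V)/\T$ over compact subintervals of $(\alpha,\beta)$ and passing to the limit, using that $U-V$ is continuous and positive on the open component.
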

\begin{proof}
Referring to the textbook \cite{Di01} (see Theorem 10.7) and Theorem 2.2 \cite{Li17}, we set
 \begin{equation*}
   I(r,z_1,z_2):=rd_1(r)(z_1-1)z_2+\frac{rz_1}{\T}.
 \end{equation*}
 Then, for any $\varphi\in H_0^1\ro$, $\varphi\geq 0$, we obtain
 \begin{equation}\label{a3.1b}
   \intr \left[ I(r,U,U_r)-I(r,V,V_r)\right]\varphi_rdr +\frac{1}{\T}\intr (U-V)\varphi dr\leq 0.
 \end{equation}
 Denote $W=:U-V$ and $U_t:=tU+(1-t)V$. A simple computation indicates that
 \begin{equation*}
 \begin{split}
   I(r,U,U_r)-I(r,V,V_r)&= I(r,U,U_r)-I(r,V,U_r)+I(r,V,U_r)-I(r,V,V_r) \\
     & =\int_0^1 \frac{\partial I}{\partial z_1}(r, U_t, U_r)dt\cdot W(r)+\int_0^1 \frac{\partial I}{\partial z_2}(r,V,(U_t)_r)dt\cdot W_r(r).
 \end{split}
 \end{equation*}
Let $\varphi(r)=\dfrac{W^+(r)}{W^+(r)+\epsilon}$ with $W^+(r):=\max\{0, W(r)\}$ and a positive constant $\epsilon$, and
note that
\begin{equation*}
\left[\ln \left(1+\frac{W^+(r)}{\epsilon}\right)\right]_r=\frac{W^+_r(r)}{W^+(r)+\epsilon},\quad
\varphi_r(r)=\frac{\epsilon}{W^+(r)+\epsilon}\left[\ln \left(1+\frac{W^+(r)}{\epsilon}\right)\right]_r.
\end{equation*}
Because $k_0>1$, $U\in C^1\rol$ and $ d_{m}:=\min\limits_{r\in\rol} d_1(r)>0$, this yields that
\begin{equation*}
  \int_0^1 \frac{\partial I}{\partial z_1}(r, U_t, U_r)dt=rd_1(r)U_r+\frac{r}{\T}\leq C
\end{equation*}
and
\begin{equation*}
\int_0^1 \frac{\partial I}{\partial z_2}(r,V,(U_t)_r)dt=rd_1(r)(V-1)\geq r_0(k_0-1)d_m.
\end{equation*}
Then it follows from \eqref{a3.1b} that
\begin{equation*}\begin{split}
  &\epsilon r_0(k_0-1)d_m\intr \left|\left[\ln \left(1+\frac{W^+(r)}{\epsilon}\right)\right]_r\right|^2dr
  +\frac{1}{\T} \intr \frac{(W^+(r))^2}{W^+(r)+\epsilon}dr\\
  &\leq C\epsilon \intr \frac{W^+(r)}{W^+(r)+\epsilon}\left|\left[\ln \left(1+\frac{W^+(r)}{\epsilon}\right)\right]_r\right|dr\\
   & \leq \frac{\epsilon r_0(k_0-1)d_m}{2}\intr \left|\left[\ln \left(1+\frac{W^+(r)}{\epsilon}\right)\right]_r\right|^2dr+\frac{C^2\epsilon(r_1-r_0)}
   {2r_0(k_0-1)d_m},
 \end{split}
\end{equation*}
where we used Young's inequality in the second inequality.
Thus, we get for any $\epsilon$
\begin{equation*}
\intr \left|\left[\ln \left(1+\frac{W^+(r)}{\epsilon}\right)\right]_r\right|^2dr\leq \frac{C^2(r_1-r_0)}{r_0^2(k_0-1)^2 d_m^2},
\end{equation*}
which further by Poincar\'{e}'s inequality gives
\begin{equation*}
\begin{split}
 \intr \left[\ln \left(1+\frac{W^+(r)}{\epsilon}\right)\right]^2dr
 &\leq(r_1-r_0)^2\intr\left|\left[\ln \left(1+\frac{W^+(r)}{\epsilon}\right)\right]_r\right|^2dr\\
    & \leq \frac{C^2(r_1-r_0)^3}{r_0^2(k_0-1)^2 d_m^2}<\infty.
\end{split}
\end{equation*}
Now letting $\epsilon \rightarrow 0^+$, one can see that if $W^+(r)\neq 0$ for some $r\in\rol$,
\begin{equation*}
 \intr \left[\ln \left(1+\frac{W^+(r)}{\epsilon}\right)\right]^2dr=\infty,
\end{equation*}
which
gets a contradiction. Therefore, $U(r)\leq V(r)$ for all $r\in\rol$.
\end{proof}

Then let's show the existence theorem as follows.
\begin{theorem}\label{t2.3}
Assume that $\underline{B}+\dfrac{1}{\T}> \mathcal{J}$, then system \eqref{2.1} admits at least one interior supersonic solution $m\in C^{1/2}\rol$ satisfying $\ell\leq m(r)\leq \mathcal{J}$ over $\rol$ for a positive constant $\ell$. Moreover, the function $m$ possesses the property that $m(r)<\mathcal{J}$ for any $r\in (r_0,r_1)$.
\end{theorem}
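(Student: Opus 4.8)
The plan is to mirror the compactness strategy used for the subsonic case, but first to remove the wrong sign of the leading coefficient and the boundary degeneracy of \eqref{2.1} by a change of unknown adapted to the supersonic branch. Since we seek $0<m<\mathcal{J}$, the coefficient $\frac{1}{m}-\frac{\mathcal{J}^2}{m^3}=\frac{m^2-\mathcal{J}^2}{m^3}$ is negative there while \eqref{2.1} degenerates at the sonic boundary $m=\mathcal{J}$. I would introduce $V:=\mathcal{J}/m$, so that $m<\mathcal{J}$ becomes $V>1$ and $m(r_0)=m(r_1)=\mathcal{J}$ becomes $V(r_0)=V(r_1)=1$. A direct computation gives $r\frac{m^2-\mathcal{J}^2}{m^3}m_r=r\frac{V+1}{V}(V-1)V_r$ and $\frac{r\mathcal{J}}{\T m}=\frac{rV}{\T}$, so \eqref{2.1} takes exactly the divergence form treated in Lemma \ref{l3.1}, with $d_1(r)=\frac{V+1}{V}>0$ and $d_2(r)=\frac{V}{\T}-\frac{\mathcal{J}}{V}+B(r)$; the principal coefficient is proportional to $(V-1)$, hence positive in $\ro$ and degenerate only where $V\to1$ at the boundary. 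To lift the degeneracy I would replace the boundary value $1$ by $k_0>1$ (equivalently cap $m$ at $\mathcal{J}/k_0<\mathcal{J}$ on $\Gamma_0,\Gamma_1$), obtaining a uniformly elliptic problem for each $k_0>1$; the genuine interior supersonic solution is recovered in the limit $k_0\to1^+$.

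For fixed $k_0$ I would solve the regularized problem by the Schauder fixed point theorem, exactly as in Lemma \ref{l2.3}: freeze the nonlinear coefficients at a given $\bar V$ in an admissible set $\{k_0\le\bar V\le K\}$, solve the resulting linear equation of the type in Lemma \ref{l3.1} with $d_1,d_2\in L^\infty$ and $d_1>0$, and use the compact embedding $H^1\ro\hookrightarrow C^0\rol$ for precompactness and continuity of the map $\bar V\mapsto V$. The crucial point is invariance of the admissible interval, and this is where the announced two-step iteration enters: unlike the subsonic case, the frozen source $d_2$ itself grows with $\bar V$ (through the $\frac{\bar V}{\T}$ term), so a single linearization does not simultaneously preserve the upper and lower barriers in the supersonic regime. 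I would therefore iterate the coefficient-and-source update in two successive solves and enforce $k_0\le V\le K$ by the comparison principle of Lemma \ref{l3.1} together with explicit $\sin$-type barriers, in addition to the outer limit $k_0\to1^+$, in contrast to the single approximation $j\to\mathcal{J}^-$ of the subsonic argument. This produces $V_{k_0}\in C^1\rol$; the comparison principle also yields the uniform upper bound on $V_{k_0}$ that translates into the lower bound $\ell\le m$, and the interior barrier gives the strict inequality $m<\mathcal{J}$ in $\ro$.

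To pass to the limit I would derive estimates uniform in $k_0$ by the energy method, in analogy with \eqref{2.5}: testing the equation against the natural quantity (the supersonic analogue of $m_j-\mathcal{J}$, e.g. $V_{k_0}-k_0$) and using the hypothesis $\underline{B}+\frac{1}{\T}>\mathcal{J}$ to control the sign of the lower-order and source contributions, I expect a bound, independent of $k_0$, on a suitable nonlinear function of $V_{k_0}$ in $H^1\ro$, hence in $C^\alpha\rol$ for $0<\alpha<\frac12$. Along a subsequence this gives the weak $H^1$ and strong $C^\alpha$ convergence needed to pass to the limit in the weak formulation \eqref{1.14} with $n=2$, producing an interior supersonic solution with $\ell\le m<\mathcal{J}$ on $\ro$ and $m(r_0)=m(r_1)=\mathcal{J}$. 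Finally, the regularity $m\in C^{1/2}\rol$ follows verbatim from the end of the proof of Theorem \ref{t2.1}, using that the squared quantity $(\mathcal{J}-m)^2$ is $C^1$ up to the boundary together with the elementary estimate of $|m(a)-m(c)|^2/|a-c|$.

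The main obstacle I expect is the simultaneous control of the two degeneracies while keeping all bounds uniform in the approximation parameter: the genuine lower bound $m\ge\ell>0$, needed to tame $1/m^3$ (that is, the upper bound on $V=\mathcal{J}/m$), must coexist with the boundary degeneracy at $m=\mathcal{J}$, and both must survive the limit $k_0\to1^+$. Concretely, the delicate steps are the construction of barriers adapted to the $r$-weighted, non-autonomous operator of Lemma \ref{l3.1} that force $m<\mathcal{J}$ strictly inside, $m=\mathcal{J}$ on the boundary and $m\ge\ell$ at once, and the verification that the two-step fixed-point iteration keeps the admissible interval invariant despite the $\bar V$-dependence of $d_2$; once these are in place, the single structural hypothesis $\underline{B}+\frac{1}{\T}>\mathcal{J}$ suffices for the energy estimate and the passage to the limit.
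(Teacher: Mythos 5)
Your overall strategy coincides with the paper's: invert the unknown so that the supersonic branch becomes $V>1$, lift the sonic degeneracy by working at boundary level $k_0>1$, solve the regularized quasi-linear problem by a nested (two-step) Schauder iteration --- an outer map freezing $\eta$ in the coefficients and an inner map linearizing $(v-1)v_r$ --- obtain estimates uniform in $k_0$ by testing against $v-k_0$, and pass to the limit $k_0\to1^+$ in the weak formulation; the $C^{1/2}$ regularity is then read off exactly as in Theorem \ref{t2.1}. (The paper regularizes by replacing $\mathcal{J}$ with a parameter $k>\mathcal{J}$ in the equation rather than by moving the boundary datum, and it obtains the invariance $k_0\le v\le\mathcal{M}$ by energy testing with $(\zeta-k_0)^-$, $(\zeta-k_0)$ and $(v-k_0)^2$ rather than by barriers, but these are cosmetic differences.)

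There is, however, one genuine gap: the strict interior inequality $m(r)<\mathcal{J}$ on $(r_0,r_1)$, which you dispose of with the phrase ``the interior barrier gives the strict inequality.'' The limit procedure only delivers $m\le\mathcal{J}$, and a barrier-plus-comparison argument does not go through as stated. The comparison principle of Lemma \ref{l3.1} requires the source $d_2$ to be a \emph{frozen} function of $r$, precisely because the genuine zeroth-order term of the quasi-linear equation, $k/v-B(r)$, is \emph{decreasing} in $v$ --- the wrong monotonicity for a maximum principle. If you freeze $d_2=B+\eta/\T-k/\eta$ at the fixed point and test a candidate subsolution $U=k_0+\mu\sin(\pi\frac{r-r_0}{r_1-r_0})$ against it, the subsolution inequality forces $\mathcal{J}-B(r)\le C\mu$, which is strictly stronger than the hypothesis $\underline{B}+\frac{1}{\T}>\mathcal{J}$; and the ellipticity constant $r_0(k_0-1)d_m$ in Lemma \ref{l3.1} degenerates as $k_0\to1^+$ in any case. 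The paper therefore devotes a separate Step 3 to this point, working directly on the limit: it sets $w=(\mathcal{J}-m)^2$, tests the weak formulation \eqref{1.14} with $(w-\varepsilon^2)^-$ on a subinterval $[\hat s_1,\hat s_2]$ where $w\ge\varepsilon^2$ at the endpoints, and controls the troublesome drift term $\frac{1}{\T}\int r\sqrt{w}\,[(w-\varepsilon^2)^-]_r/(\mathcal{J}-\sqrt{w})\,dr$ by introducing the primitive $h(s)=2\mathcal{J}s+s^2+2\mathcal{J}^2\ln(\mathcal{J}-s)$, whose concavity and $h'(0)=0$ give the needed sign; this yields $(w-\varepsilon^2)^-\equiv0$, i.e.\ $m\le\mathcal{J}-\varepsilon$ throughout $[\hat s_1,\hat s_2]$. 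You would need to supply an argument of this kind (or a genuinely uniform barrier compatible with the sign structure) to close the proof.
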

\begin{proof}
This proof is divided into three steps for clarity.

{\it Step 1.} We first consider the following approximate equation of \eqref{2.1}
\be\label{a1.6}
\begin{cases}
\left[r\left(\dfrac{1}{m_k}-\dfrac{k^2}{m_k^3}\right)(m_k)_r+\dfrac{r k}{\T m_k}\right]_r=m_k-B(r),\\
m_k(r_0)=m_k(r_1)=\mathcal{J},
\end{cases}
\ee
with the parameter $k> \mathcal{J}$.
Let $v_k(r):=\dfrac{k}{m_k(r)}$, thus, \eqref{a1.6} becomes
\begin{equation}\label{a1.7}
\begin{cases}
\left[r\left(v_k-\dfrac{1}{v_k}\right)(v_k)_r+\dfrac{rv_k}{\T}\right]_r-\left(\dfrac{k}{v_k}-B\right)=0, r\in(r_0,r_1),\\
v_k(r_0)=v_k(r_1)=\dfrac{k}{\mathcal{J}}\triangleq k_0>1.
\end{cases}
\end{equation}
Next we only need to prove that there exists a weak solution $v_k(r)$ to \eqref{a1.7} satisfying $v_k \geq k_0$. Here our adopted approach is the iterative method. Due to the effect of high dimensions space, we apply a so-called two-steps iteration to complete the following proof.

Let $X$ be a solution space, denoted by
\begin{equation*}\begin{split}
&X:=\{\phi(r):\phi\in C^1\rol, k_0\leq \phi(r)\leq \mathcal{M}, \phi(r_0)=\phi(r_1)=k_0,\\
&\quad \quad ||\phi||_{C^\alpha\rol}\leq \Lambda,||\phi||_{C^1\rol}\leq \Upsilon(\Lambda)\}.
\end{split}
\end{equation*}
Here some positive constants $\mathcal{M}$, $\Lambda$ and $\Upsilon(\Lambda)$ are determined later. Then we  define an operator $\Psi: \eta\longrightarrow v$ by solving the quasi-linear system
\begin{equation}\label{2.10}
\begin{cases}
\left[\dfrac{r(\eta+1)}{\eta}\cdot(v-1)v_r\right]_r+\dfrac{rv_r}{\T}-\left(\dfrac{k}
{\eta}-B-\dfrac{\eta}{\T}\right)=0, \quad r\in(r_0,r_1),\\
v(r_0)=v(r_1)=k_0,
\end{cases}
\end{equation}
where $\eta\in X$. To use the Schauder fixed point theorem,
we first claim that system \eqref{2.10} has a unique solution $v\in C^{1+\alpha}\rol$ for $0<\alpha<1$ and arbitrary fixed $\eta\in X $.

To this end, set $$S:=\{\omega\in C^0\rol| k_0\leq\omega\leq \mathcal{K}\quad\text{and}\quad\omega (r_0)=\omega(r_1)=k_0\}$$
 for a undetermined constant $\mathcal{K}$, and let's define a fixed-point operator $i:S\rightarrow C^0\rol$, $i(\xi)=\zeta$  by solving the linearized system of \eqref{2.10}
\begin{equation}\label{a3.5}
\begin{cases}
\left[rg_1(\eta,\xi)\cdot \zeta_r\right]_r+\dfrac{r\zeta_r}{\T}+g_2(\eta,\T)=0,\quad r\in(r_0,r_1), \\
\zeta(r_0)=\zeta(r_1)=k_0.
\end{cases}
\end{equation}
Here $\xi\in {S}$ and we have defined $g_1:=\dfrac{r(\eta+1)(\xi-1)}{\eta}$ and $g_2:=B+\dfrac{\eta}{\T}-\dfrac{k}{\eta}$. Furthermore, note that $g_1$ and $g_2$ are $C^1$-continuous with respect to $\eta$.

Actually, one finds that the fixed point of the operator $i$ is a solution of \eqref{2.10}, so we need to prove the existence of the fixed point of $i$.
Since \eqref{a3.5} has a solution $\zeta\in H^1\ro$, the operator $i$ is precompact by the compact imbedding $H^1\ro\hookrightarrow C^0\rol$. The continuity of $i$ is  based on the standard argument, obviously. Next, we only need to prove $k_0\leq \zeta(r)\leq \mathcal{K}$ over $\rol$.
Multiplying \eqref{a3.5} by $(\zeta-k_0)^-(r):=\min\left\{0,(\zeta-k_0)(r)\right\}$, we have
\begin{equation}\label{a3.5b}
\begin{split}
  &\intr rg_1(r)|[(\zeta-k_0)^-]_r|^2dr
  +\frac{1}{2\T}\intr [(\zeta-k_0)^-]^2dr+\intr (-g_2(r))
  (\zeta-k_0)^-dr=0
\end{split}
\end{equation}
where we have used
\bc
  \frac{1}{\T}\intr r\zeta_r(\zeta-k_0)^-dr=-\frac{1}{2\T}\intr [(\zeta-k_0)^-]^2dr.
\ec
Here each term of \eqref{a3.5b} is non-negative since $g_1\geq r_0(k_0-1)>0$, $\eta\geq k_0$, $B+\frac{k_0}{\T}> \hj$ and
\bc
 g_2(\eta,\T)=B+\dfrac{\eta}{\T}-\dfrac{k}{\eta}=\left(B+\frac{k_0}{\T}-\hj\right)+\left(\hj-\dfrac{k}{\eta}\right)
 +\dfrac{\eta- k_0}{\T}>0.
\ec
Then this implies by \eqref{a3.5b} that $\zeta(r)\geq k_0$ over $\rol$. Now multiplying  \eqref{a3.5} by $(\zeta-k_0)(r)$, we show that
\begin{equation*}
\begin{split}
  &r_0(k_0-1)\intr |(\zeta-k_0)_r|^2dr+\frac{1}{2\T}\intr (\zeta-k_0)^2dr\leq\intr g_2(\eta,\T)(\zeta-k_0)dr.\\
\end{split}
\end{equation*}
Thus, by Young's inequality and Poincar\'{e}'s inequality
\begin{equation*}
  \intr (\zeta-k_0)^2dr
\leq (r_1-r_0)^2\int_{r_0}^{r_1}|(\zeta-k_0)_r|^2dr,
\end{equation*}
we get
\begin{equation*}
\begin{split}
  r_0(k_0-1)\intr |(\zeta-k_0)_r|^2dr&\leq\intr g_2(\eta,\T)(\zeta-k_0)dr\\
  &\leq \frac{r_0(k_0-1)}{2(r_1-r_0)^2}\intr (\zeta-k_0)^2dr+\frac{(r_1-r_0)^2}{2r_0(k_0-1)}\intr g_2^2(\eta,\T)dr\\
  &\leq \frac{r_0(k_0-1)}{2}\intr |(\zeta-k_0)_r|^2dr+\frac{(r_1-r_0)^2}{2r_0(k_0-1)}\|g_2(\eta)\|_{L^2}^2,
\end{split}
\end{equation*}
which indicates that
\begin{equation*}
  \|(\zeta-k_0)_r\|_{L^2}\leq \frac{r_1-r_0}{r_0(k_0-1)}\|g_2(\eta)\|_{L^2}.
\end{equation*}
Further, we conclude that
\begin{equation*}
  \zeta\leq k_0+C(r_0,r_1,k_0)\|g_2(\eta)\|_{L^2},
\end{equation*}
then choose $\mathcal{K}(\eta):=k_0+C(r_0,r_1,k_0)\|g_2(\eta)\|_{L^2}$ such that $k_0\leq \zeta\leq \mathcal{K}$. Applying the Schauder fixed point theorem, we have a fixed point $v\in S$ such that $i(v)=v$, which is also  a weak solution to \eqref{2.10}. Thanks to the regularity theory and Sobolev imbedding theory \cite{Di01}, it's proved that $v\in C^{1+\alpha_0}\rol$ such that $k_0\leq v\leq \mathcal{K}(\eta)$,
\begin{equation}\label{a3.9}
\|v\|_{C^{\alpha_0}\rol}\leq C_0(k_0,\eta,\T,\mathcal{K}(\eta))\quad\text{and}\quad\|v\|_{C^{1+\alpha_0}\rol}\leq C(C_0,k_0,\eta,\T,\mathcal{K}(\eta))
\end{equation}
for constants $C_0$ and $0<\alpha_0<\frac{1}{2}$. Moreover, we can prove that  the solution of \eqref{2.10} is unique by Lemma \ref{l3.1}. Thus the claim is verified.

Next, we go back to show that $\Psi$ has a fixed point, so it is necessary to prove $\Psi(X)\subset X$. Since $v(r)\geq k_0$ over $\rol$, it remains to determine the upper bound of the solution $v(r)$ for \eqref{2.10}.
Multiplying  \eqref{2.10} by $(v-k_0)^2$, we derive
\bc
\begin{split}
  &\intr \dfrac{r(\eta+1)}{2\eta}\cdot|[(v-k_0)^2]_r|^2dr+\frac{1}{3\T}\intr (v-k_0)^3dr\leq\intr\left(B+\dfrac{\eta}{\T}-\dfrac{k}{\eta}\right)(v-k_0)^2dr,\\
\end{split}
\ec
which leads to
 \begin{equation*}\begin{split}
  \frac{r_0}{2}\intr |[(v-k_0)^2]_r|^2dr
  &\leq\intr\left({B}+ \dfrac{\eta}{\T}\right)(v-k_0)^2dr\\
  &\leq\frac{r_0}{4(r_1-r_0)^2}\intr(v-k_0)^4dr+\frac{(r_1-r_0)^2}{r_0}\intr\left({B}+ \dfrac{\eta}{\T}\right)^2dr\\
  &\leq\frac{r_0}{4}\intr |[(v-k_0)^2]_r|^2dr+\frac{(r_1-r_0)^3}{r_0}\left(\overline{B}+ \dfrac{\mathcal{M}}{\T}\right)^2.
\end{split}
\end{equation*}
Here we have used Poincar\'{e}'s inequality
\begin{equation*}
  \intr (v-k_0)^4dr\leq (r_1-r_0)^2\int_{r_0}^{r_1}|[(v-k_0)^2]_r|^2dr.
\end{equation*}
It then follows that
\bc
\|[(v-k_0)^2]_r\|_{L^2}^2\leq \frac{4(r_1-r_0)^3}{ r_0^2}\left(\overline{B}+ \dfrac{\mathcal{M}}{\T}\right)^2.
\ec
Moreover it holds that
\begin{equation*}
  0<v(r)\leq k_0+C\sqrt{\overline{B}+ \dfrac{\mathcal{M}}{\T}}
\end{equation*}
for a positive constant $C$ depending on $(r_0,r_1)$. Thus by a simple calculation, we can choose $$\mathcal{M}=\mathcal{M}(\overline{B},\T)\geq k_0+\dfrac{C^2}{2\T}+C\sqrt{\overline{B}+\dfrac{k_0}{\T}+\dfrac{C^2}{4\T^2}}$$ such that $k_0+C\sqrt{\overline{B}+ \dfrac{\mathcal{M}}{\T}}\leq \mathcal{M}. $  Then we can see that $v(r)\leq \mathcal{M}$ over $\rol$ for any $k_0\leq\eta\leq \mathcal{M}$.
Hereafter it implies by \eqref{a3.9} that for a constant $0<\alpha_0<\frac{1}{2}$,
\begin{equation*}
  \|v\|_{C^{\alpha_0}\rol}\leq C_0(\mathcal{M},\mathcal{K}(\mathcal{M}),\T,k_0)\quad \text{ and}\quad \|v\|_{C^{1+\alpha}\rol}\leq C(\mathcal{M},\mathcal{K}(\mathcal{M}),\T,k_0,C_0),
\end{equation*}
and we determine $\alpha=\alpha_0$, $\Lambda=C_0(\mathcal{M},\mathcal{K}(\mathcal{M}),\T,k_0)$ and $\Upsilon(\Lambda)=C(\mathcal{M},\mathcal{K}(\mathcal{M}),\T,k_0,\Lambda)$.  Now it can be verified that $v\in X$ and $X$ is a bounded and closed convex subset of $C^1\rol$. Also, the operator $\Psi$ is a compact map of $X$ into itself by the compact imbedding $C^{1+\alpha}\rol\hookrightarrow C^{1}\rol$. Using the continuity theory, one can see that  the operator $\Psi$ is continuous. Hence, a fixed point of the map $\Psi$ can be obtained by the Schauder fixed point theorem. In the end, \eqref{a1.7} has a weak solution $v_k\in C^1\rol$, and $m_k(r)=k/v_k(r)$ is an interior supersonic solution of \eqref{a1.6} over $\rol$.

{\it Step 2.} This step is to prove the existence of the interior supersonic solutions of \eqref{2.1}. Multiplying \eqref{a1.7} by $(v_k-k_0)(r)$, and using Young's inequality and Poincar\'{e}'s inequality, we have
\begin{equation*}\begin{split}
  &(k_0-1)\intr\dfrac{r(v_k+1)}{v_k}|(v_k)_r|^2dr+\frac{4}{9}\intr \dfrac{r(v_k+1)}{v_k}|[(v_k-k_0)^\frac{3}{2}]_r|^2dr\\&=
  \intr\left(B+\dfrac{v_k+k_0}{2\T}-\dfrac{k}{v_k}\right)(v_k-k_0)dr\\
  &\leq  \frac{2}{3}\intr (v_k-k_0)^\frac{3}{2}dr+  \frac{1}{3}\intr \left(B+\dfrac{v_k+k_0}{2\T}\right)^3dr\\
  &\leq\frac{r_0}{3(r_1-r_0)^2}\intr (v_k-k_0)^3dr+\frac{(r_1-r_0)^3}{r_0}+\frac{r_1-r_0}{3} \left(\overline{B}+\dfrac{\mathcal{M}+k_0}{2\T}\right)^3\\
  &\leq \frac{r_0}{3}\intr |[(v_k-k_0)^\frac{3}{2}]_r|^2dr+C(\overline{B},\T,\mathcal{M},k_0,r_0,r_1),
\end{split}
\end{equation*}
where we used
\begin{equation*}
  \intr \frac{rv_k}{\T}(v_k-k_0)_rdr=-\dfrac{1}{2\T}\intr (v_k+k_0)(v_k-k_0)dr.
\end{equation*}
Thus, it follows that
\begin{equation*}
||(k_0-1)^\frac{1}{2}(v_k)_r||_{L^2}+||(v_k-k_0)^\frac{3}{2}||_{H^1}\leq C,
\end{equation*}
for some constants $C$ only depending on $(\overline{B},\T, \mathcal{M},k_0,r_0,r_1)$.
In fact, as $k_0\rightarrow1^+$, i.e. $k\rightarrow \hj^+$, given that a suitable choice of $\mathcal{M}$,
 we can obtain
\begin{equation*}
  \|(v_k-k_0)^\frac{3}{2}\|_{L^{\infty}}\leq C(\overline{B},\T,r_0,r_1),
\end{equation*}
which gives
\begin{equation*}
  v_k\leq k_0+C^{\frac{2}{3}}.
\end{equation*}
Then,
\begin{equation}\label{2.13}
  m_k(r)=\frac{k}{v_k(r)}\geq \frac{k}{k+C^{\frac{2}{3}}}\geq\frac{1}{1+C^{\frac{2}{3}}}\triangleq \ell\quad\text{for}\quad r\in\rol.
\end{equation}
A direct computation yields that
\begin{equation*}
  (m_k)_r=-\frac{k(v_k)_r}{v_k^2}\quad\text{and}\quad ((\mathcal{J}-m_k)^2)_r=\frac{4\mathcal{J}k(v_k-k_0)^\frac{1}{2}((v_k-k_0)^\frac{3}{2})_r}{3v_k^3},
\end{equation*}
which together with \eqref{2.13} implies
\begin{equation*}
  ||(k_0-1)^\frac{1}{2}(m_k)_r||_{L^2}+\|(\mathcal{J}-m_k)^2\|_{H^1}\leq  C(\overline{B},\T,r_0,r_1).
\end{equation*}
Finally, one can see that there exists a function $m$ such that, as $k\rightarrow \mathcal{J}^+$, up to a subsequence,
\begin{equation}\label{a3.8}
\begin{split}
  & (\mathcal{J}-m_k)^2\rightharpoonup (\mathcal{J}-m)^2\quad \text{weakly~in}\quad H^1\ro, \\
  &(\mathcal{J}-m_k)^\frac{3}{2}\rightharpoonup (\mathcal{J}-m)^\frac{3}{2}\quad \text{weakly~in}\quad H^1\ro,\\
   &(\mathcal{J}-m_k)^\frac{3}{2}\rightarrow(\mathcal{J}-m)^\frac{3}{2}\quad \text{strongly~in}\quad C^\alpha\rol,\quad 0<\alpha<\frac{1}{2},\\
    & (k_0-1)(m_k)_r\rightarrow0\quad \text{strongly~in}\quad L^2\ro.
\end{split}
\end{equation}
Hence equation \eqref{2.1} has an interior supersonic solution $m(r)$ over $\rol$, and \eqref{1.14} holds. The lower bound of the solution $m$ is obtained by \eqref{2.13} and \eqref{a3.8}, and $m\in C^{1/2}\rol$ is easily obtained as similar as that of Theorem \ref{t2.1}.

{\it Step 3.} At the last step,  we need to prove that $m(r)<\mathcal{J}$ over $\ro$. If a function $m$ satisfies $m(r)\equiv \mathcal{J}$ on any interval $[s_1, s_2]\subset\rol$, then $m$ is not a solution of \eqref{2.1} because $\underline{B}+\frac{1}{\T}> \mathcal{J}$. Thus, there exist two points $\hat{s}_1$ and $\hat{s}_2$ satisfying
$0<\hat{s}_1-r_0\ll1$ and $0<r_1-\hat{s}_2\ll1$. Then let $\varepsilon>0$ be a small number such that $m(\hat{s}_1)$, $m(\hat{s}_2)\leq \mathcal{J}-\varepsilon<\hj$. Next, we are going to prove that $m(r)\leq \mathcal{J}-\varepsilon$ over $[\hat{s}_1, \hat{s}_2]$. After that, set $w=(\mathcal{J}-m)^2$, further we know $w\in H_0^1\ro$ and $w(\hat{s}_1)$, $w(\hat{s}_2)\geq \varepsilon^2$. From \eqref{1.14}, taking $\varphi(r)=(w-\varepsilon^2)^-(r)$, we have
\begin{equation}\label{3.7}
\begin{split}
&\int_{\hat{s}_1}^{\hat{s}_2} r\frac{2\mathcal{J}-\sqrt{w}}{2(\mathcal{J}-\sqrt{w})^3}|[(w-\varepsilon^2)^-]_r|^2dr
+\int_{\hat{s}_1}^{\hat{s}_2} \frac{r\mathcal{J}[(w-\varepsilon^2)^-]_r}{\T (\mathcal{J}-\sqrt{w})}dr\\
&\quad+\int_{\hat{s}_1}^{\hat{s}_2} (\mathcal{J}-\sqrt{w}-B(r))(w-\varepsilon^2)^-dr=0.
\end{split}
\end{equation}
Since $2\mathcal{J}-\sqrt{w}> \mathcal{J}-\sqrt{w}\geq0$, one can see that the first term of  \eqref{3.7} is non-negative. Then, by a direct computation, we change the last two of \eqref{3.7} as
\begin{equation}\label{3.8}
\begin{split}
&\int_{\hat{s}_1}^{\hat{s}_2} \frac{r\mathcal{J}\left[(w-\varepsilon^2)^-\right]_r}{\T (\mathcal{J}-\sqrt{w})}dr+\int_{\hat{s}_1}^{\hat{s}_2} (\mathcal{J}-\sqrt{w}-B(r))(w-\varepsilon^2)^-dr\\
&\quad=\dfrac{1}{\T}\int_{\hat{s}_1}^{\hat{s}_2}\frac{r\sqrt{w}\left[(w-
\varepsilon^2)^-\right]_r}{\mathcal{J}-\sqrt{w}}dr
+\dfrac{1}{\T}\int_{\hat{s}_1}^{\hat{s}_2} {r[(w-\varepsilon^2)^-]_r}dr\\
&\quad\quad+\int_{\hat{s}_1}^{\hat{s}_2} (\mathcal{J}-\sqrt{w}-B(r))(w-\varepsilon^2)^-dr\\
&\quad =\dfrac{1}{\T}\int_{\hat{s}_1}^{\hat{s}_2}\frac{r\sqrt{w}\left[(w-\varepsilon^2)^-
\right]_r}{\mathcal{J}-\sqrt{w}}dr+\int_{\hat{s}_1}^{\hat{s}_2} \left(\mathcal{J}-\sqrt{w}-B(r)-\frac{1}{\T}\right)(w-\varepsilon^2)^-dr.\\
\end{split}
\end{equation}
Here we notice that the second term on the right-hand side of \eqref{3.8} is also non-negative because of $\underline{B}+\frac{1}{\T}> \mathcal{J}$. It remains  to show the non-negativity of the first term on the right-hand side of \eqref{3.8}.
Note that
\begin{equation*}
-[h(\sqrt{w})]_r:=-\left[2\mathcal{J}\sqrt{w}+w+2\mathcal{J}^2 \ln(\mathcal{J}-\sqrt{w})\right]_r=\frac{\sqrt{w}w_r}{\mathcal{J}-\sqrt{w}},
\end{equation*}
then it follows that
\begin{equation}\label{3.9}
  \begin{split}
\dfrac{1}{\T}\int_{\hat{s}_1}^{\hat{s}_2}\frac{r\sqrt{w}\left[(w-\varepsilon^2)^-
\right]_r}{\mathcal{J}-\sqrt{w}}dr&
=-\dfrac{1}{\T}\int_{\hat{s}_1}^{\hat{s}_2} r[h(\sqrt{w^\varepsilon})]_rdr=\dfrac{1}{\T}\int_{\hat{s}_1}^{\hat{s}_2} [h(\sqrt{w^\varepsilon})-h(\varepsilon)]dr,
\end{split}
\end{equation}
where $w^\varepsilon:=(w-\varepsilon^2)^-+\varepsilon^2$. Hence,  $0\leq\sqrt{w^\varepsilon}\leq \varepsilon$. Then a simple computation yields that
\begin{equation*}
  h'(s)=2\mathcal{J}+2s-\frac{2\mathcal{J}^2}{\mathcal{J}-s}<0 \quad \text{for}\quad s\in (0,\varepsilon],
\end{equation*}
 because $h''(s)<0$ on $(0,\varepsilon)$ and $h'(0)=0$. Thus, it holds that the right side of \eqref{3.9} is non-negative, which leads to $(w-\varepsilon^2)^-=0$. We derive that $m(r)\leq\mathcal{J}-\varepsilon$ over $[\hat{s}_1, \hat{s}_2]$ for some small constants $\varepsilon$.
The proof is finished.
\end{proof}

{\it 3.2. the case of $n=3$.}
In the subsection, we state the results of interior supersonic solutions to \eqref{a1.4} in three dimensional case.

\begin{theorem}
Assume that $\underline{\mathcal{B}}> \mathcal{J}$, then system \eqref{2.9} admits an interior supersonic solution $m\in C^{1/2}\rol$ satisfying $\bar{\ell}\leq m(r)\leq \mathcal{J}$ over $\rol$ for a positive constant $\bar{\ell}$, moreover, $0<m(r)<\mathcal{J}$ over $\ro$.
\end{theorem}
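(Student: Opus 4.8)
The plan is to mirror the three‑step scheme of the two‑dimensional Theorem~\ref{t2.3}, adjusting every estimate to the weight $r^{2}$ and to the additional source term $+2$ in \eqref{2.9}; these two features are precisely what replace the hypothesis $\underline{B}+1/\T>\mathcal{J}$ by $\underline{\mathcal{B}}>\mathcal{J}$. First I would introduce, for a parameter $k>\mathcal{J}$, the nondegenerate approximation
\be\label{prop-approx}
\begin{cases}
\left[r^{2}\left(\dfrac{1}{m_k}-\dfrac{k^2}{m_k^3}\right)(m_k)_r+\dfrac{r^{2}k}{\T m_k}\right]_r=m_k-B(r)+2,\quad r\in\ro,\\
m_k(r_0)=m_k(r_1)=\mathcal{J},
\end{cases}
\ee
and pass to the reciprocal unknown $v_k:=k/m_k$, for which \eqref{prop-approx} becomes $\bigl[r^{2}(v_k-1/v_k)(v_k)_r+r^{2}v_k/\T\bigr]_r-(k/v_k-B+2)=0$ with $v_k(r_0)=v_k(r_1)=k/\mathcal{J}=:k_0>1$. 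Expanding $[r^{2}v_k/\T]_r=(2rv_k+r^{2}(v_k)_r)/\T$ transfers an extra term $2rv_k/\T$ into the source, which is the structural origin of the quantity $B+2r/\T-2$.

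Next I would run the same two‑steps (nested) Schauder iteration as in Step~1 of Theorem~\ref{t2.3}. The outer operator $\Psi:\eta\mapsto v$ solves the quasilinear problem $\bigl[\tfrac{r^{2}(\eta+1)}{\eta}(v-1)v_r\bigr]_r+\tfrac{r^{2}v_r}{\T}-g_2=0$, $v(r_0)=v(r_1)=k_0$, with frozen source $g_2:=B+\tfrac{2r\eta}{\T}-\tfrac{k}{\eta}-2$; the inner operator $i:\xi\mapsto\zeta$ solves its linearization. The lower bound $v_k\geq k_0$ follows by testing the linearized problem with $(\zeta-k_0)^{-}$ once one checks $g_2>0$, and here the decomposition
\be\label{prop-g2}
g_2=\left(B+\frac{2rk_0}{\T}-2-\mathcal{J}\right)+\left(\mathcal{J}-\frac{k}{\eta}\right)+\frac{2r(\eta-k_0)}{\T}
\ee
shows $g_2>0$ because $\eta\geq k_0=k/\mathcal{J}$ forces $k/\eta\leq\mathcal{J}$ and the first bracket is bounded below by $\underline{\mathcal{B}}-\mathcal{J}>0$. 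The upper bound $v\leq\mathcal{M}$ and the $C^{1+\alpha}$ a~priori estimates are obtained as before by multiplying the quasilinear equation by $(v-k_0)^{2}$ and using Poincar\'{e}'s and Young's inequalities, now with the harmless weight $r^{2}\geq r_0^{2}>0$; uniqueness of the inner problem uses a comparison principle identical to Lemma~\ref{l3.1} with $r$ replaced by $r^{2}$. This yields a weak solution $v_k\in C^{1}\rol$ with $v_k\geq k_0$, hence an interior supersonic solution $m_k=k/v_k$ of \eqref{prop-approx}.

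I would then carry out the degenerate limit $k\to\mathcal{J}^{+}$ (equivalently $k_0\to1^{+}$) as in Step~2: testing the $v_k$‑equation by $(v_k-k_0)$ and invoking Young's and Poincar\'{e}'s inequalities with the weight $r^{2}$ furnishes the $k$‑uniform bounds $\|(v_k-k_0)^{3/2}\|_{H^{1}\ro}\leq C$ and $\|(k_0-1)^{1/2}(v_k)_r\|_{L^{2}\ro}\leq C$ with $C=C(\overline{\mathcal{B}},\T,r_0,r_1)$. Translating through $m_k=k/v_k$ gives $\|(\mathcal{J}-m_k)^{2}\|_{H^{1}\ro}\leq C$ and the uniform lower bound $m_k\geq k/(k+C^{2/3})\geq\bar{\ell}>0$. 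Extracting a subsequence as in \eqref{a3.8} produces a limit $m$ with $(\mathcal{J}-m)^{2}\in H^{1}_0\ro$ solving the weak formulation \eqref{1.14} with $n=3$ and satisfying $\bar{\ell}\leq m\leq\mathcal{J}$; the H\"older bound $m\in C^{1/2}\rol$ follows verbatim from Theorem~\ref{t2.1}.

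The last step, strict subsonicity $m<\mathcal{J}$ on $\ro$, is where the weight $r^{2}$ must be tracked carefully. Setting $w:=(\mathcal{J}-m)^{2}$ and testing the $n=3$ version of \eqref{1.14} with $\varphi=(w-\varepsilon^{2})^{-}$ on a subinterval $[\hat{s}_1,\hat{s}_2]$, the leading quadratic term $\int_{\hat{s}_1}^{\hat{s}_2} r^{2}\tfrac{2\mathcal{J}-\sqrt{w}}{2(\mathcal{J}-\sqrt{w})^{3}}|[(w-\varepsilon^{2})^{-}]_r|^{2}\,dr$ is nonnegative, and I would split $\tfrac{\mathcal{J}}{\mathcal{J}-\sqrt{w}}=1+\tfrac{\sqrt{w}}{\mathcal{J}-\sqrt{w}}$. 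The $\sqrt{w}$‑piece is handled by the primitive $h(s)=2\mathcal{J}s+s^{2}+2\mathcal{J}^{2}\ln(\mathcal{J}-s)$, for which $h'(s)=-2s^{2}/(\mathcal{J}-s)<0$ on $(0,\varepsilon]$; integrating by parts the weighted integral $-\tfrac1\T\int_{\hat{s}_1}^{\hat{s}_2} r^{2}[h(\sqrt{w^{\varepsilon}})]_r\,dr$ (with $w^{\varepsilon}:=(w-\varepsilon^{2})^{-}+\varepsilon^{2}$) now yields $\tfrac1\T\int_{\hat{s}_1}^{\hat{s}_2}2r\,[h(\sqrt{w^{\varepsilon}})-h(\varepsilon)]\,dr\geq0$, the boundary terms vanishing because $(w-\varepsilon^{2})^{-}=0$ at $\hat{s}_1,\hat{s}_2$. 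The remaining $1$‑piece, after one more integration by parts of $\tfrac1\T\int_{\hat{s}_1}^{\hat{s}_2} r^{2}[(w-\varepsilon^{2})^{-}]_r\,dr$, recombines with the source contribution into $\int_{\hat{s}_1}^{\hat{s}_2}\bigl(\mathcal{J}-\sqrt{w}-B+2-\tfrac{2r}{\T}\bigr)(w-\varepsilon^{2})^{-}\,dr$, which is nonnegative exactly because $B+\tfrac{2r}{\T}-2\geq\underline{\mathcal{B}}>\mathcal{J}\geq\mathcal{J}-\sqrt{w}$ and $(w-\varepsilon^{2})^{-}\leq0$. Hence every term vanishes, forcing $(w-\varepsilon^{2})^{-}\equiv0$ and $m\leq\mathcal{J}-\varepsilon$ locally, which gives $m<\mathcal{J}$ on $\ro$. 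I expect the genuine obstacle to be the interplay between the degeneracy of the limit $k\to\mathcal{J}^{+}$ and these extra $r$‑dependent terms: the nested Schauder machinery is routine, but one must verify that neither the weight $r^{2}$ nor the source $+2$ spoils the two sign conditions on which the construction rests—positivity of the frozen source $g_2$ in \eqref{prop-g2} and nonnegativity of the recombined source term above—both of which reduce, once the $r$‑dependent pieces are absorbed, to the single hypothesis $\underline{\mathcal{B}}>\mathcal{J}$.
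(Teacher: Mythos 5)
Your proposal is correct and follows essentially the same route as the paper: the same substitution $v_k=k/m_k$, the same nested (two-step) Schauder iteration with the frozen source $g_2=B+\tfrac{2r\eta}{\T}-2-\tfrac{k}{\eta}$ whose positivity reduces to $\underline{\mathcal{B}}>\mathcal{J}$, the same $k$-uniform energy bounds and degenerate limit $k\to\mathcal{J}^{+}$, and the same primitive $h(s)=2\mathcal{J}s+s^{2}+2\mathcal{J}^{2}\ln(\mathcal{J}-s)$ for the strict bound $m<\mathcal{J}$. In fact you carry out the final step (adapting the $r^{2}$ weight in the integration by parts) in more detail than the paper, which only sketches the three-dimensional case by reference to Theorem \ref{t2.3}.
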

\begin{proof} This proof is similar as that of Theorem \ref{t2.3}, so we sketch it as follows.
The approximate system of \eqref{2.9} is the following equation
\be\label{3.2}
\begin{cases}
\left[r^2\left(\dfrac{1}{m_k}-\dfrac{k^2}{m_k^3}\right)(m_k)_r+\dfrac{r^2 k}{\T m_k}\right]_r=m_k-B(r)+2,\\
m_k(r_0)=m_k(r_1)=\mathcal{J},
\end{cases}
\ee
with the parameter $k> \mathcal{J}$.
Let $v_k(r):=\dfrac{k}{m_k(r)}$, thus \eqref{3.2} can be recast as
\begin{equation}\label{3.3}
\begin{cases}
\left[r^2\left(v_k-\dfrac{1}{v_k}\right)(v_k)_r+\dfrac{r^2v_k}{\T}\right]_r-\left(\dfrac{k}{v_k}-B+2\right)=0, r\in(r_0,r_1),\\
v_k(0)=v_k(1)=k_0.
\end{cases}
\end{equation}
Then we define an operator $\tilde{\Psi}: \eta\longrightarrow v$ by solving the following system
\begin{equation}\label{3.4}
\begin{cases}
\left[\dfrac{r^2(\eta+1)}{\eta}\cdot(v-1)v_r\right]_r+\dfrac{r^2v_r}{\T}
-\left(\dfrac{k}{\eta}-B+2-\dfrac{2r\eta}{\T}\right)=0,\quad r\in(r_0,r_1), \\
v(0)=v(1)=k_0,
\end{cases}
\end{equation}
where $\eta\in X$. As similar to that of Theorem \ref{t2.3}, and by applying the Schauder fixed point theorem,  we show that there exists a unique solution $v\in C^{1+\alpha}\rol$ to the quasi-linear system \eqref{3.4}, and there exists a constant $\tilde{\mathcal{K}}$ depending on $\eta$ such that $k_0\leq v\leq\tilde{\mathcal{K}}$,
\begin{equation}\label{c3.16}
\|v\|_{C^{\alpha}\rol}\leq C_0(k_0,\eta,\T,\tilde{\mathcal{K}}(\eta))\quad\text{and}\quad\|v\|_{C^{1+\alpha}\rol}\leq C(C_0,k_0,\eta,\T,\tilde{\mathcal{K}}(\eta))
\end{equation}
for constants $\tilde{\mathcal{K}}(\eta)$ and $0<\alpha<\frac{1}{2}$.
In the following, we only need to prove $k_0\leq v_k\leq \mathcal{M}$ with a proper constant $\mathcal{M}$. Obviously, since $v\geq  k_0$, then we only prove $ v_k\leq \mathcal{M}$.
Now multiplying  \eqref{3.4} by $(v-k_0)^2$, we derive
\bc
\begin{split}
  &\intr \dfrac{r^2(\eta+1)}{2\eta}\cdot|[(v-k_0)^2]_r|^2dr+\frac{2}{3\T}\intr r(v-k_0)^3dr\\
  &\leq\intr\left(B+\dfrac{2r\eta}{\T}-2-\dfrac{k}{\eta}\right)(v-k_0)^2dr,\\
\end{split}
\ec
which follows from the proof in Theorem \ref{t2.3} that
\bc
\|[(v-k_0)^2]_r\|_{L^2}\leq C(r_0,r_1,k_0)\left(\overline{\mathcal{B}}+{\mathcal{M}}\right).
\ec
Thus, we get
\begin{equation*}
  0<v(r)\leq k_0+C(r_0,r_1,k_0)\sqrt{\left(\overline{\mathcal{B}}+\mathcal{M}\right)}.
\end{equation*}
Then take $\mathcal{M}=\mathcal{M}(\overline{\mathcal{B}},k_0)$ sufficiently large such that $$k_0+C(r_0,r_1,k_0)\sqrt{\left(\overline{\mathcal{B}}+\mathcal{M}\right)}\leq \mathcal{M}.$$  As a result, it holds that $v(r)\leq \mathcal{M}$ over $\rol$. Next it follows from \eqref{c3.16} that $v\in X$ and $X$ is also a bounded and closed convex subset of $C^1\rol$.
Hereafter the Sobolev imbedding theorem and the Schauder fixed point theorem yield that there exists a fixed  point $v_k$ of the operator $\tilde{\Psi}$ such that
\begin{equation*}
  \tilde{\Psi}(v_k)=v_k.
\end{equation*}
Hence equation \eqref{3.3} has a weak solution $v_k$, then a solution of \eqref{3.2} would be obtained, that is
\begin{equation*}
  m_k(r)=\frac{k}{v_k(r)},\quad r\in \rol.
\end{equation*}

 Next, the bound estimate of $m_k$ can  be verified by
 \begin{equation*}
  ||(k_0-1)^\frac{1}{2}(m_k)_r||_{L^2}+\|(\mathcal{J}-m_k)^2\|_{H^1}\leq  C(\overline{\mathcal{B}},k_0),
\end{equation*}
whose proof is shown in Theorem \ref{t2.3}. Moreover, as similar to that of Theorem \ref{t2.3}, there exists a limit of convergence $m(r)$ by  a subsequence $\{m_k\}_{\hj<k<+\infty}$ as $k\rightarrow\hj^+$, which is an interior supersonic solution of \eqref{2.9}. For a constant $0<\bar{\ell}<\hj$, it is easy to check that $\bar{\ell}\leq m(r)<\mathcal{J}$ over $\ro$ and $m\in C^{1/2}\rol$. The proof is complete.
\end{proof}

\section*{Acknowledgements}

The research of M. Mei  was partially supported by NSERC grant RGPIN 354724-2016 and FRQNT grant 2019-CO-256440. The research of G. Zhang was partially supported by NSF of China (No. 11871012). The research of K. Zhang was partially supported by NSF of China (No. 11771071).

\end{document}